\numberwithin{equation}{section}
\def\paragraph#1{\subsubsection*{#1}}
\long\def\deleted#1{{\color{lightgray}\leavevmode\marginpar{\color{lightgray}\texttt{[deleted]}}#1}}
\long\def\deleted#1{}
\newcommand{\p}[1]{\mathaccent19{#1}} 
\def\seq 0->#1-#2->#3-#4->#5->0{0\longrightarrow
            #1\maparrow{#2}#3\maparrow{#4}#5\longrightarrow 0}
\newcommand{\maparrow}[1]{\mathrel{\mathop{\longrightarrow}\limits^{#1}}}
\renewcommand{\vectorfields}[2][]{\mathfrak{X}_{#1}(#2)}
\newcommand{\Hor}{\operatorname{Hor}}
\newcommand{\Ver}{\operatorname{Ver}}
\newcommand{\ver}[2][]{\Ver_{#1}(#2)}
\newcommand{\hlift}{ξ^\H} 
\newcommand{\vlift}{ξ^\V} 
\def\V{{\sssize\mathsf{V}}}
\def\C{{\sssize \mathsf{C}}}
\def\H{{\sssize\mathsf{H}}}
\renewcommand{\Sec}{\operatorname{Sec}}
\renewcommand{\sec}[2][]{\Sec_{#1}(#2)}
\newcommand{\Tplus}{+_{\sssize\mathsf{T}}}
\newcommand{\Tminus}{-_{\sssize\mathsf{T}}}
\newcommand{\Ttimes}{·_{\sssize\mathsf{T}}}
\newcommand{\caninv}[1]{{\raisebox{1.5pt}{$χ$}}_{#1}} 
\newcommand{\slit}[1]{\mathaccent23{#1}} 
\def\jvj{\mathsf{jvj}} 
\def\curv{\mathrm{Curv}}
\begin{document}

\title[On the linearization of a non linear connection]{On the construction of the linearization of a nonlinear connection}

\author[E.\ Mart\'{\i}nez]{Eduardo Mart\'{\i}nez}
\address{Eduardo Mart\'{\i}nez:
IUMA and Departamento de Matem\'{a}tica Aplicada,
Universidad de Zaragoza,
Pedro Cerbuna 12,
50009 Zaragoza, Spain}
\address{ORCID: https://orcid.org/0000-0003-3270-5681}
\email{emf@unizar.es}

\thanks{Partial financial support from MINECO (Spain) grant MTM2015-64166-C2-1-P and PGC2018-098265-B-C31, and from Gobierno de Arag\'on (Spain) grant E38\_17R is acknowledged}

\keywords{Nonlinear connection, linear connection, connections in Finsler geometry, connections in Lagrange geometry}



\begin{abstract} 
The construction of a linear connection on a pullback bundle from a connection on a vector bundle is explained in terms of fiberwise linear approximation. This procedure clarifies the geometric meaning of the linearized connection as well as the associated parallel transport and curvature.  
\end{abstract}

\maketitle

\section{Introduction}

In a recent paper~\cite{linearization.1} we have studied the linerization of a non linear connection on a vector bundle. For a connection on a vector bundle $\map{π}{E}{M}$ we defined a linear connection on the pullback bundle $π^*E$ by means of a covariant derivative operator, expressed in terms of brackets of vector fields and other standard operations. Such a linearized connection is a very relevant object that has proven to be very useful in the solution of several problems related to dynamical systems defined by a system of second-order differential equations on a manifold, as they are the problem of characterizing the existence of coordinates in which the system is linear~\cite{linearizable.SODE}, the problem of decoupling a system of second-order differential equations into subsystems~\cite{decoupling, MikeFest, generalized.submersive}, the inverse problem of Lagrangian Mechanics~\cite{Douglas, IP.autonomo}, the classification of derivations~\cite{deriv,deriv2}, among others~\cite{tesis,linearization.1}. 

In a local coordinate system the coefficients of the linearized connection are the derivatives of the coefficients of the nonlinear connection with respect to the coordinates in the fibre. Its use dates back to Berwald in his studies on Finsler geometry, and was formalized and extended by Vilms~\cite{Vilms}, by using local charts. It was rediscovered many times, for instance in~\cite{tesis,linearizable.SODE}, and in many different geometrical versions~\cite{Massa.Pagani, Byrnes, Szilasi}. Despite the simplicity of the local description above, several attempts to describe geometrically the process of linearization in a proper, intrinsic and clear way have proven unsatisfactory or unsuccessful.

In our previous approach~\cite{linearization.1} we provided an intrinsic formula for the covariant derivative associated to the linear connection. In spite of the relevance and usefulness of the covariant derivative, the physical and geometrical meaning of the resultant operation is unclear. The purpose of the present paper is to clarify the meaning of such linearized connection.

For a nonlinear connection we will consider an appropriate restriction of the horizontal lift, obtaining a map between the fibers of two vector bundles. This map is then linearized by means of the differential at a given point, the best linear approximation of the map at that point. We will show that this procedure produces the horizontal lift of a linear connection on the pullback bundle. It will be shown that even in the case when the connection is smooth on a submanifold the resulting connection is linear, a fact which is fundamental for Finsler geometry, where the non linear connection is actually homogeneous and hence not smooth on the zero section. From the horizontal lift we will obtain the expression of the associated covariant derivative, obtaining the formula which served as the definition of the linearized connection in~\cite{linearization.1}. 

The above interpretation allows to obtain the horizontal lift of a vector field as the fibre derivative of such a vector field. As a consequence, the flow of the horizontal lift of a vector field is but the fibre derivative of the flow of the given vector field, and hence it provides a detailed description of the parallel transport system associated to the linearized connection, from where we will derive some explicit new formulas for its curvature.

A connection on a fiber bundle can be understood as a section of the first jet bundle of sections of that bundle. It is well known~\cite{NatOp} that by taking the fibre derivative (or the vertical derivative) of such section one obtains a linear connection which however does not coincide with the linearization defined in this paper, as it is defined on a different bundle. It will be shown that composing with a canonical map we obtain a section of the first jet bundle of the pullback bundle which coincides with the linearization. An equivalent version on the vertical bundle will also be provided.

The linearization of a nonlinear connection is a natural first order prolongation of the original connection. That is, their construction is invariant under fibered diffeomorphism, providing hence a geometric object, and uses only the first order derivatives of the initial connection. In the finite dimensional case, all natural first order prolongations of a connection were classified in~\cite{NatOp} obtaining a 1-parameter family of connections. The linearized connection is the only member of that family which is linear. We will provide a direct proof of this fact, which is valid for connections on Banach bundles. Moreover, it will be shown that the linearization can be also characterized as the only semibasic connection in the family, a property which is easier to work with in practical computations.  

\medskip

The paper is organized as follows. In Section~\ref{section:preliminaries} we will provide the necessary preliminary results. In Section~\ref{section:linearization} it will be shown that by linearizing the horizontal lifting map in an adequate sense we obtain a linear connection on the pullback bundle. In Section~\ref{section:natural.prolongation} we will show that all natural first order prolongations of a connection fit into a 1-parameter family of connections on the pullback bundle, and we characterize the linearization as the only element in the family which is linear. In Section~\ref{section:fibre.derivative} we will show the relation between the horizontal lift (with respect to the linearized connection) of a vector field and the fibre derivative of such vector field, and we will clarify the meaning of the parallel transport and of the curvature for the linearized connection. Finally, in Section~\ref{section:jets} we will reconsider the problem from the perspective of jet bundle theory, and we will construct the section associated to the linearized connection from the section associated to the original non linear one. 

\medskip

Throughout this paper a manifold will be understood as a smooth Banach manifold, and a fiber bundle will mean a smooth Banach fiber bundle (see~\cite{MTA, Lang}). All objects will be considered in the $C^∞$-smooth category. 

\section{Preliminaries}
\label{section:preliminaries}

The tangent bundle to a manifold $M$ will be denoted $\map{τ_M}{TM}{M}$, and the $\cinfty{M}$-module of vector fields on $M$ will be denoted $\vectorfields{M}$. For a fiber bundle $\map{τ}{P}{M}$, the set of sections of $τ$ will be denoted either by $\sec{τ}$ or by $\sec{P}$ whenever the projection is clear.

Given a map $\map{f}{N}{M}$ and a fiber bundle $\map{τ}{P}{M}$, the pullback bundle $\map{f^*τ}{f^*P}{N}$ has total space $f^*P=\set{(n,p)∈ N\times P}{τ(p)=f(n)}$ and projection $f^*τ(n,p)=n$. A section of $\map{f^*τ}{f^*P}{N}$ is equivalent to a section of $P$ along $f$, that is, a map $\map{\sigma}{N}{P}$ such that $τ∘σ=f$. The set of sections along $f$ will be denoted by $\sec[f]{τ}$ or $\sec[f]{P}$. For the details see~\cite{Poor, sections.along.maps}. 

We will mainly be concerned with the case of a vector bundle $\map{π}{E}{M}$. The pullback $\map{f^*π}{f^*E}{N}$ is also a vector bundle. More concretely, we are interested in the case where the map $f$ is just the projection $π$. A section $\sigma∈\sec{π^*E}$ is said to be basic if there exists a section $\alpha∈\sec{E}$ such that $σ=α∘π$. We will frequently identify both sections, i.e. $\alpha$ indicates both the section of $E$ and the section of $π^*E$, and the context will make clear the meaning. However, in the last section we will need to be more precise.  

The vertical subbundle $\map{τ_E^\V\equivτ_E\big|_{\Ver{π}}}{\Ver{π}=\Ker(Tπ)}{E}$ is canonically isomorphic to the pullback bundle $\map{π^*π}{π^*E}{E}$. The isomorphism is the vertical lifting map $\map{\vlift_π}{π^*E}{\ver{π}⊂TE}$, defined by $\vlift_π(a,b)=\frac{d}{ds}(a+sb)|_{s=0}$ for every $(a,b)∈π^*E$. The vertical projection is the map $\map{ν_π}{\Ver{π}}{E}$ given by $ν_π=\pr_2○(\vlift_π)^{-1}$, that is $ν_π\bigl(\vlift_π(a,b)\bigr)=b$ for $(a,b)∈π^*E$. For a section $\sigma∈\sec{π^*E}$ the vertical lift of $\sigma$ is the vector field $\sigma^\V∈\vectorfields{E}$ given by $σ^\V=\vlift_π∘\sigma$. For a section $σ$ of $E$ along $π$ we write $σ^\V$ for the vertical lift of the section of $π^*E$ associated to $σ$. Clearly we have $ν_π∘σ^\V=σ$ for $σ∈\sec[π]{E}$

The tangent bundle $\map{τ_{π^*E}}{T(π^*E)}{π^*E}$ to the manifold $π^*E$ can be canonically identified with the pullback bundle $(Tπ)^*(TE)$. The identification is given by $(ω_1,ω_2)^{\tsize\cdot}(0)\simeq\bigl(\dot{ω}_1(0),\dot{ω}_2(0)\bigr)$ for $ω_1,ω_2$ curves in $E$ such that $π∘ω_1=π∘ω_2$. Along this paper we will use such identification $ T(π^*E)\simeq(Tπ)^*(TE)$, and therefore a tangent vector to $π^*E$ at the point $(a,b)$ will be considered as a pair $(w_1,w_2)∈T_aE\times T_bE$ such that $Tπ(w_1)=Tπ(w_2)$.

\subsection*{Connections}
For a vector bundle $\map{π}{E}{M}$ there is a short exact sequence of vector bundles over $E$
\[
\seq 0-> π^*E -\vlift_π-> TE -p_E-> π^*TM ->0,
\]
where $\map{p_E}{TE}{π^*(TM)}$ is the projection $p_E(w)=(τ_E(w),Tπ(w))$. This sequence is known as the fundamental sequence of $\map{π}{E}{M}$.

An Ehresmann  connection on the vector bundle $E$ is a (right) splitting of the fundamental sequence, that is, a map $\map{\xi^\H}{π^*TM}{TE}$ such that $p_E∘\xi^\H=\id_{π^*(TM)}$. The map $\hlift$ is said to be the horizontal lifting. Equivalently, a connection is given by the associated left splitting of that sequence, that is, the map $\map{\vartheta}{TE}{π^*E}$ such that $\vartheta∘\vlift_π=\id_{π^*E}$ and $\vartheta∘\hlift=0$. Closely related to the map $\vartheta$ is the the connector $\map{κ=\pr_2∘\vartheta}{TE}{E}$, also known as the Dombrowski connection map. See~\cite{Poor} for the general theory of connections on vector bundles.

A connection decomposes $TE$ as a direct sum $TE=\Hor\oplus\Ver E$, where $\Hor=\Im(\hlift)=\Ker(\vartheta)$ is said to be the horizontal subbundle. The projectors over the horizontal and vertical subbundles are given by $P_\H=\hlift∘ p_E$ and $P_\V=\vlift_π∘\vartheta$. For a section $X∈\sec{π^*TM}$ the horizontal lift of $X$ is the vector field $X^\H∈\vectorfields{E}$ given by $X^\H=\hlift∘X$. For a vector field $X∈\sec[π]{TM}$ along $π$ the symbol $X^\H$ denotes the horizontal lift of the associated section of $π^*(TM)$. The curvature of the non linear connection is the $E$-valued 2-form $\map{R}{π^*(TM\wedge TM)}{π^*E}$ defined by
\[
R(X,Y)=\vartheta([X^\H,Y^\H]),\qquad\qquad X,Y∈\sec{π^*TM}.
\]

\medskip

A connection $\hlift$ on the pullback bundle $\map{π^*π}{π^*E}{E}$ is therefore given by a map $\map{\hlift}{(π^*π)^*(TE)}{T(π^*E)}$ of the form $\hlift\bigl((a,b),w\bigr)=\bigl(w,B(a,b)w\bigr)$ for $\map{B(a,b)}{T_aE}{T_bE}$ a linear map such that $T_bπ=B(a,b)∘T_aπ$.

A connection $\hlift$ on $\map{π^*π}{π^*E}{E}$ is said to be basic if it is the pullback of a connection $ξ^{\ul{\H}}$ on $\map{π}{E}{M}$, i.e. it is of the form $\hlift((a,b),w)=(w,ξ^{\ul{\H}}(b,Tπ(w)))$ for all $(a,b)∈π^*E$ and $w∈T_aE$. The connection $\hlift$ is said to be semibasic if for all $(a,b)∈π^*E$ the horizontal lift of a vertical vector $w∈\ver[a]{π}$ is of the form $\hlift((a,b),w)=(w,0_b)$. Obviously a basic connection is semibasic but the converse does not hold. In terms of the connector, the connection is semibasic if $κ(w,0_b)=0_{π(b)}$ for every vertical $w∈\ver[a]{π}$.

\subsection*{Double vector bundle structure}
For a vector bundle $\map{π}{E}{M}$ the manifold $TE$ has two different vector bundle structures. On one hand we have the standard tangent bundle $\map{τ_E}{TE}{E}$ with operations denoted by $+$ and $·$ (the dot usually omitted in the notation). On the other, we also have a vector bundle structure $\map{Tπ}{TE}{TM}$ where the operations are the differential of the operations in $\map{π}{E}{M}$ and which will be denoted $\Tplus$ and $\Ttimes$. They can be easily defined in terms of vectors tangent to curves by
\[
λ\Ttimes \frac{dα}{dt}(0)=\frac{d(λα)}{dt}(0)
\]
and
\[
\frac{dα}{dt}(0)\Tplus\frac{dβ}{dt}(0)=\frac{d(α+β)}{dt}(0),
\]
where the curves $α$ and $β$ satisfy $π∘α=π∘β$.

These operations are compatible, in the sense that they define a structure of double vector bundle on $TE$ over $E$ and $TM$. We will list in the next paragraphs a few properties, which are consequences of such a double structure, and that will be needed later on. For more details on double vector bundles see~\cite{Mackenzie, GrRo}.

The two different operations of addition are compatible in the sense that they satisfy the \textsl{interchange law}
\[
(w_1+w_2)\Tplus(w_3+w_4)=(w_1\Tplus w_3)+(w_2\Tplus w_4),
\]
for $w_1,w_2,w_3,w_4∈TE$ such that $τ_{E}(w_1)=τ_{E}(w_2)$, $τ_{E}(w_3)=τ_{E}(w_4)$ and $Tπ(w_1)=Tπ(w_3)$, $Tπ(w_2)=Tπ(w_4)$. Moreover, we have the following properties, which apply when the operations on the left hand side are defined,
\[
\begin{aligned}
&τ_E(v_1+v_2)=τ_E(v_1)=τ_E(v_2)\\
&τ_E(λv)=τ_E(v)\\[7pt]
&τ_E(v_1\Tplus v_2)=τ_E(v_1)+τ_E(v_2)\\
&τ_E(λ\Ttimes v)=λτ_E(v)
\end{aligned}
\qquad\qquad
\begin{aligned}
&Tπ(v_1+v_2)=Tπ(v_1)+Tπ(v_2)\\
&Tπ(λv)=λTπ(v)\\[7pt]
&Tπ(v_1\Tplus v_2)=Tπ(v_1)=Tπ(v_2)\\
&Tπ(λ\Ttimes v)=Tπ(v).
\end{aligned}
\]

\medskip

A connection $\map{\hlift}{π^*TM}{TE}$ on the vector bundle $\map{π}{E}{M}$ is said to be linear if $h(a,v)$ is linear in the argument $a$ for every fixed $v∈TM$, that is, if it satisfies
\begin{gather*}
\hlift(a+a',v)=\hlift(a,v)\Tplus\hlift(a',v)\\
\hlift(λa,v)=λ\Ttimes(\hlift(a,v)),
\end{gather*}
for every $λ∈ℝ$, every $(a,v)∈π^*TM$ and $a'∈E$ with $π(a)=π(a')$. A linear connection can be equivalently described by means of a covariant derivative operator $\map{D}{\vectorfields{M}\times\sec{E}}{\sec{E}}$. The relation between the covariant derivative~$D$ and the connection $\hlift$ is given by $D_vσ=ν_π\bigl(Tσ(v)-\hlift(σ(m),v)\bigr)=κ(Tσ(v))$ for $v∈T_mM$ and $σ∈\sec{E}$.

\medskip

Both $\map{τ_E}{TE}{E}$ and $\map{Tπ}{TE}{TM}$ being vector bundles have corresponding associated vertical lifting maps to $T(TE)$. On one hand we have the map $\map{\vlift_{τ_E}}{τ_E^*(TE)}{\ver{τ_E}⊂TTE}$ given by
\[
\vlift_{τ_E}(v,w)=\frac{d}{ds}\bigl(v+sw\bigr)\at{s=0},\qquad\text{where $τ_E(v)=τ_E(w)$,}
\]
and on the other $\map{\vlift_{Tπ}}{(Tπ)^*(TE)}{\ver{Tπ}⊂TE}$ given by
\[
\vlift_{Tπ}(v,w)=\frac{d}{ds}\bigl(v\Tplus s\Ttimes w\bigr)\at{s=0},\qquad\text{where $Tπ(v)=Tπ(w)$.}
\]

Correspondingly, we have two vertical projection maps, $\map{ν_{τ_E}}{\ver{τ_E}}{TE}$ defined by $ν_{τ_E}(\vlift_{τ_E}(v,w))=w$, and $\map{ν_{Tπ}}{\ver{Tπ}}{TE}$ defined by $ν_{Tπ}(\vlift_{Tπ}(v,w))=w$. For the constructions in this paper the most relevant one is $ν_{Tπ}$, which has the following properties,
\begin{align*}
&ν_{Tπ}(v_1\Tplus v_2)=ν_{Tπ}(v_1)+ν_{Tπ}(v_2)\\
&ν_{Tπ}(λ\Ttimes v)=λν_{Tπ}(v)\\
&ν_{Tπ}(λv)=λ\Ttimes ν_{Tπ}(v).
\end{align*}

\subsection*{Some auxiliary results}
We will need the following auxiliary properties which can be found scattered in the literature.

\begin{proposition}
\label{several.relations}
The following properties hold.
\begin{enumerate}
\item $τ_E○ν_{Tπ}=ν_π○Tτ_E\big|_{\ver{Tπ}}$.
\item For $v,w∈TE$ such that $Tπ(v)=Tπ(w)$,
\[
Tτ_E\bigl(\vlift_{Tπ}(v,w)\bigr)=\vlift_{τ_E}\bigl(τ_E(v),τ_E(w)\bigr).
\]
\item For $v,w∈\ver{π}$ such that $τ_E(v)=τ_E(w)$,
\[
Tν_π\bigl(\vlift_{τ_E}(v,w)\bigr)=\vlift_π\bigl(ν_π(v),ν_π(w)\bigr).
\]
\end{enumerate}
\end{proposition}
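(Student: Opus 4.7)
The strategy is to verify each identity by direct computation from the defining formulas of the vertical lifts, exploiting the compatibility relations of the double vector bundle $TE$ listed at the end of the previous subsection. I would prove~(2) first, since~(1) is then an immediate corollary.

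For~(2), I would start from $\vlift_{T\pi}(v,w) = \frac{d}{ds}\bigl(v \Tplus s \Ttimes w\bigr)\at{s=0}$ and push $T\tau_E$ through the derivative. The double structure relations
\[
\tau_E(v_1 \Tplus v_2) = \tau_E(v_1) + \tau_E(v_2), \qquad \tau_E(\lambda \Ttimes v) = \lambda\, \tau_E(v)
\]
give $\tau_E(v \Tplus s \Ttimes w) = \tau_E(v) + s\, \tau_E(w)$, a curve in $E$ lying in a single fibre of $\pi$ because $T\pi(v) = T\pi(w)$ forces $\pi \circ \tau_E(v) = \pi \circ \tau_E(w)$. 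Differentiating at $s=0$ and matching against the definition of the vertical lift of the pair $(\tau_E(v), \tau_E(w))$ yields the identity.

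For~(3), the crucial point is that $\vlift_\pi$ is a vector bundle isomorphism $\pi^*E \to \Ver{\pi}$ over $E$, so it is linear on fibres. For $v,w \in \Ver{\pi}$ with $\tau_E(v) = \tau_E(w) = a$, I would write $v = \vlift_\pi(a, \nu_\pi(v))$ and $w = \vlift_\pi(a, \nu_\pi(w))$ and use fibrewise linearity to obtain
\[
v + sw = \vlift_\pi\bigl(a,\, \nu_\pi(v) + s\, \nu_\pi(w)\bigr).
\]
Applying $T\nu_\pi$ to the tangent vector at $s=0$ and invoking $\nu_\pi \circ \vlift_\pi = \pr_2$, the computation collapses to $\frac{d}{ds}(\nu_\pi(v) + s\, \nu_\pi(w))\at{s=0} = \vlift_\pi(\nu_\pi(v), \nu_\pi(w))$. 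For~(1), given $\xi = \vlift_{T\pi}(v,w) \in \Ver{T\pi}$, the left-hand side equals $\tau_E(\nu_{T\pi}(\xi)) = \tau_E(w)$ directly from the definition of $\nu_{T\pi}$, while the right-hand side uses~(2) to identify $T\tau_E(\xi)$ as the vertical lift of $(\tau_E(v), \tau_E(w))$, whose image under $\nu_\pi$ is again $\tau_E(w)$.

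No deep obstacle is expected here; each statement unwinds from the definitions. The only real care required is bookkeeping: keeping straight which of the two vector bundle structures on $TE$ is being lifted or projected at each step, and recognising that $\tau_E$ is itself a morphism of vector bundles from $(TE, T\pi, TM)$ to $(E, \pi, M)$. This compatibility is precisely what ensures that the intermediate curves live in the appropriate pullback fibre, so that $\vlift_\pi$ can legitimately be invoked on the result.
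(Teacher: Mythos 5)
Your proposal is correct and takes essentially the same approach as the paper's proof: you establish \thetag{2} by pushing $T\tau_E$ through the derivative of $s\mapsto v\Tplus s\Ttimes w$ using the double vector bundle compatibilities, deduce \thetag{1} from \thetag{2} together with $\tau_E\bigl(\nu_{T\pi}(\vlift_{T\pi}(v,w))\bigr)=\tau_E(w)$, and prove \thetag{3} by fibrewise linearity of $\vlift_\pi$, writing $v+sw=\vlift_\pi\bigl(a,\nu_\pi(v)+s\,\nu_\pi(w)\bigr)$, exactly as the paper does. Your explicit remark that $T\pi(v)=T\pi(w)$ forces $\pi(\tau_E(v))=\pi(\tau_E(w))$, so that the curve $\tau_E(v)+s\,\tau_E(w)$ stays in a single fibre, is a detail the paper leaves implicit but changes nothing in the argument.
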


\begin{proof}
\thetag{2} If $v,w∈\ver{π}$ are such that $Tπ(v)=Tπ(w)$ then
\begin{align*}
Tτ_E\bigl(\vlift_{Tπ}(v,w)\bigr)
&=Tτ_E\Bigl(\frac{d}{ds}(v\Tplus s\Ttimes w)\at{s=0}\Bigr)\\
&=\frac{d}{ds}τ_E\bigl(v\Tplus s\Ttimes w)\bigr)\at{s=0}\\
&=\frac{d}{ds}\bigl(τ_E(v)+sτ_E(w)\bigr)\at{s=0}\\
&=\vlift_{τ_E}\bigl(τ_E(v),τ_E(w)\bigr).
\end{align*}

\thetag{1} We write $V∈\ver[v]{Tπ}$ in the form $V=\vlift_{Tπ}(v,w)$ for a unique $w∈TE$ such that $Tπ(v)=Tπ(w)$. Then, using \thetag{2} we have
\[
ν_π○Tτ_E(V)
=ν_π○Tτ_E(\vlift_{Tπ}(v,w))\\
=ν_π\bigl(\vlift_{τ_E}\bigl(τ_E(v),τ_E(w)\bigr)\\
=τ_E(w)
\]
and on the other hand
\[
τ_E○ν_{Tπ}(V)=τ_E○ν_{Tπ}\Bigl(\frac{d}{ds}(v\Tplus s\Ttimes w)\at{s=0}\Bigr)=τ_E(w).
\]

\thetag{3} If $v,w∈\ver[a]{π}$ we can write $v=\vlift_π(a,b)$ and $w=\vlift_π(a,c)$ for some $b,c∈E_{π(a)}$. Then
\[
\vlift_{τ_E}(v,w)
=\frac{d}{ds}[\vlift_π(a,b)+s\vlift_π(a,c)]\at{s=0}
=\frac{d}{ds}\vlift_π(a,b+sc)\at{s=0},
\]
and hence
\begin{align*}
Tν_π\bigl(\vlift_{τ_E}(v,w)\bigr)
&=Tν_π\Bigl(\frac{d}{ds}\vlift_π(a,b+sc)\at{s=0}\Bigr)\\
&=\frac{d}{ds}ν_π\bigl(\vlift_E(a,b+sc)\bigr)\at{s=0}\\
&=\frac{d}{ds}(b+sc)\at{s=0}\\
&=\vlift_π(b,c).
\end{align*}
This proves the third property, as $ν_π(v)=b$ and $ν_π(w)=c$.
\end{proof}

\medskip 

On the double tangent bundle to a manifold $P$ we have a canonical involution $\map{\caninv{TP}}{T(TP)}{T(TP)}$ which intertwines the two vector bundle structures $\map{τ_{TP}}{TTP}{TP}$ and $\map{Tτ_P}{TTP}{TP}$, i.e. $\caninv{TP}○\caninv{TP}=\id_{TTP}$, $τ_{TP}○\caninv{TP}=Tτ_P$. It is easily defined in terms of 1-parameter families of curves $\map{θ}{ℝ^2}{P}$ by
\[
\caninv{TP}\Bigl(\frac{d}{ds}\Bigl(\frac{d}{dt}θ(s,t)\at{t=0}\Bigr)\at{s=0}\Bigr)
=\frac{d}{dt}\Bigl(\frac{d}{ds}θ(s,t)\at{s=0}\Bigr)\at{t=0}.
\] 
It satisfies 
\[
\begin{aligned}
&\caninv{TP}(v_1+v_2)=\caninv{TP}(v_1)\Tplus\caninv{TP}(v_2)\\
&\caninv{TP}(v_1\Tplus v_2)=\caninv{TP}(v_1)+\caninv{TP}(v_2)
\end{aligned}
\qquad\qquad
\begin{aligned}
&\caninv{TP}(λ\Ttimes v)=λ\caninv{TP}(v)\\
&\caninv{TP}(λv)=λ\Ttimes\caninv{TP}(v).
\end{aligned}
\]

In the concrete case $P=E$ we also have the following properties that will be used later on.
\begin{proposition}
\label{involution.vertical}
Let $\map{π}{E}{M}$ be a fibre bundle. The canonical involution $\map{\caninv{TE}}{TTE}{TTE}$ satisfies
\begin{enumerate}
\item $\vlift_{Tπ}=\caninv{TE}∘T\vlift_π$.
\item $ν_{Tπ}=Tν_π∘\caninv{TE}|_{\ver{Tπ}}$.
\item $\caninv{TE}$ restricts to a diffeomorphism $\map{\caninv{TE}}{\ver{Tπ}}{T\ver{π}}$.
\end{enumerate}
\end{proposition}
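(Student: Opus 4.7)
My plan is to prove (1) by direct curve computation, from which both (2) and (3) follow by algebraic manipulation using $\caninv{TE}^2=\id$ and the fact that $\vlift_π$ is a diffeomorphism onto $\ver{π}$.

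For (1), fix $(v,w)\in(Tπ)^*(TE)\simeq T(π^*E)$ and realize $v=\dot{ω}_1(0)$, $w=\dot{ω}_2(0)$ for curves $ω_1,ω_2$ in $E$ with $π○ω_1=π○ω_2$. The composition $t\mapsto \vlift_π(ω_1(t),ω_2(t))=\frac{d}{ds}(ω_1(t)+sω_2(t))|_{s=0}$ gives
\[
T\vlift_π(v,w)=\frac{d}{dt}\Bigl(\frac{d}{ds}\bigl(ω_1(t)+sω_2(t)\bigr)\at{s=0}\Bigr)\at{t=0}.
\]
Applying $\caninv{TE}$ swaps the order of differentiation. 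For fixed $s$, the curves $ω_1(t)$ and $sω_2(t)$ share projection $π○ω_1=π○ω_2$, so the sum is defined and differentiation in $t$ produces $\Tplus$ and $\Ttimes$; this yields
\[
\caninv{TE}\bigl(T\vlift_π(v,w)\bigr)=\frac{d}{ds}\bigl(v\Tplus s\Ttimes w\bigr)\at{s=0}=\vlift_{Tπ}(v,w),
\]
proving (1).

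For (2), let $V\in\ver{Tπ}$ and write uniquely $V=\vlift_{Tπ}(v,w)$ with $Tπ(v)=Tπ(w)$. Using $\caninv{TE}^2=\id_{TTE}$ together with (1) gives $\caninv{TE}(V)=\caninv{TE}(\vlift_{Tπ}(v,w))=T\vlift_π(v,w)$. Applying $Tν_π$ and using the identity $ν_π○\vlift_π=\pr_2$ (the second projection on $π^*E$), we obtain $Tν_π(\caninv{TE}(V))=T\pr_2(v,w)=w=ν_{Tπ}(V)$, which is the desired equality.

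For (3), recall $\vlift_π$ is a diffeomorphism from $π^*E$ onto $\ver{π}$, so $T\vlift_π$ is a diffeomorphism from $T(π^*E)\simeq(Tπ)^*(TE)$ onto $T\ver{π}$; on the other hand $\vlift_{Tπ}$ is a diffeomorphism from $(Tπ)^*(TE)$ onto $\ver{Tπ}$. The identity (1) rewritten as $\caninv{TE}|_{T\ver{π}}=\vlift_{Tπ}○(T\vlift_π)^{-1}$ shows that $\caninv{TE}$ sends $T\ver{π}$ diffeomorphically onto $\ver{Tπ}$; since $\caninv{TE}$ is an involution, its restriction to $\ver{Tπ}$ is the inverse diffeomorphism onto $T\ver{π}$. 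The only real obstacle is notational bookkeeping: keeping track of the two vector bundle structures on $TE$ and of the implicit identification $T(π^*E)\simeq(Tπ)^*(TE)$ when differentiating $\vlift_π$.
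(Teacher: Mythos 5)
Your proposal is correct and follows essentially the same route as the paper: part (1) by the identical double-curve computation with the involution swapping the order of differentiation, and parts (2) and (3) deduced algebraically from (1) using $\caninv{TE}\circ\caninv{TE}=\id$ and the fact that $\vlift_π$ and $\vlift_{Tπ}$ are isomorphisms. Your version of (2), via $Tν_π\circ T\vlift_π=T(ν_π\circ\vlift_π)=T\pr_2$, merely spells out what the paper compresses into ``taking inverses and the second component.''
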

\begin{proof}
\thetag{1} Given $(v,w)∈(Tπ)^*TE\simeq T(π^*E)$ we consider a pair of curves $α(t),β(t)$ in $E$ such that $π∘α=π∘β$ and $v=\dot{α}(0)$, $w=\dot{β}(0)$. Then
\begin{align*}
\caninv{TE}∘T\vlift_π(v,w)
&=\caninv{TE}\Bigl(\frac{d}{dt}\,\vlift_π\bigl(α(t),β(t)\bigr)\at{t=0}\Bigr)\\
&=\caninv{TE}\Bigl(\frac{d}{dt}\frac{d}{ds}\bigl(α(t)+sβ(t)\bigr)\at{s=0}\at{t=0}\Bigr)\\
&=\frac{d}{ds}\frac{d}{dt}\bigl(α(t)+sβ(t)\bigr)\at{t=0}\at{s=0}\\
&=\frac{d}{ds}\bigl(v\Tplus s\Ttimes w\bigr)\at{s=0}\\
&=\vlift_{Tπ}(v,w).
\end{align*}

\thetag{2} Taking inverses in the above equation we get $(\vlift_{Tπ})^{-1}=(T\vlift_π)^{-1}∘\caninv{TE}$, and taking the second component we obtain $ν_{Tπ}=Tν_π∘\caninv{TE}|_{\ver{Tπ}}$. 

\thetag{3} The third property follows from the first and the fact that $\vlift_{Tπ}$ and $\vlift_π$ are isomorphisms.
\end{proof}

\section{Linearization of a non linear connection on a vector bundle}
\label{section:linearization}

Consider a nonlinear connection $\Hor$ on a vector bundle $\map{π}{E}{M}$ given by a horizontal lifting map $\map{\hlift}{π^*TM}{TE}$. Fix a point $a∈E$ and a vector $w∈T_aE$ tangent to $E$ at the point $a$. Setting $m=π(a)$ and $\bar{w}=Tπ(w)∈T_mM$ we consider the map $c∈E_m\mapsto \hlift(c,\bar{w})∈TE$. The image of this map is in the fibre $(Tπ)^{-1}(\bar{w})$ because $Tπ(\hlift(c,\bar{w}))=\bar{w}$ for all $c∈E_m$. Therefore we have a map between two vector spaces
\[
\map{ψ_{\bar{w}}}{π^{-1}(m)}{(Tπ)^{-1}(\bar{w})},\qquad c\mapsto \hlift(c,\bar{w}).
\]
The connection $\hlift$ is linear if and only if the map $ψ_{\bar{w}}$ is linear for every fixed~$\bar{w}$. When $\hlift$ is not linear, the differential of the map $ψ_{\bar{w}}$ at the point $a$, the best linear approximation to $ψ_{\bar{w}}$ at $a$, is a linear map $b\mapsto Dψ_v(a)b$ between the same vector spaces. The vector $Dψ_{\bar{w}}(a)b$ is thus an element of $(Tπ)^{-1}(\bar{w})⊂TE$. 

One has to be careful with the differential $Dψ_{\bar{w}}(a)b$ because the operations (sum and product by scalars) involved in this limit are the ones in the vector space $(Tπ)^{-1}(v)$, i.e. those of the vector bundle $\map{Tπ}{TE}{TM}$. In other words the precise meaning of the differential $Dψ_{\bar{w}}(a)b$ is
\begin{align*}
Dψ_{\bar{w}}(a)b&=\lim_{s→0}\frac{1}{s}\Ttimes[ψ_{\bar{w}}(a+sb) \Tminus ψ_{\bar{w}}(a)]
\\
&=\lim_{s→0}\frac{1}{s}\Ttimes[\hlift(a+sb,\bar{w}) \Tminus \hlift(a,\bar{w})].
\end{align*}

The vector $Dψ_{\bar{w}}(a)b$ is tangent to $E$ at the point $b$. In this way, for $(a,b)∈π^*E$ we have defined a linear  map $B(a,b):w∈T_aE\mapsto Dψ_{\bar{w}}(a)b∈T_bE$ which satisfies $T_bπ(B(a,b)w)=T_aπ(w)$, and hence it defines a horizontal lift $\bigl((a,b),w\bigr)\mapsto \bigl(w,B(a,b)w\bigr)$ on the pullback bundle $π^*E→E$, which will be shown to be a linear connection on such bundle.

To avoid as much as possible the explicit use of limits, instead of working with the differential in the above vector spaces we will make use of the canonical identification of a vector tangent to the fibre with a vertical vector. That is, we will consider the vector tangent to the curve $s\mapsto \hlift(a+sb,\bar{w})$, which is a $Tπ$-vertical vector, and then we take the element in the fibre corresponding to it. In this way, we can equivalently define
\[
B(a,b)w=ν_{Tπ}\Bigl(\,\frac{d}{ds}\hlift(a+sb,Tπ(w))\at{s=0}\,\Bigr),
\]
which is the expression to be used in what follows.

\begin{theorem}
\label{linear.connection.0}
Let $\Hor$ be an Ehresmann connection on a vector bundle $\map{π}{E}{M}$ with horizontal lift $\hlift$. For $(a,b)∈π^*E$ and $w∈T_aE$ let $B(a,b)w∈TE$ be the vector  
\[
B(a,b)w = ν_{Tπ}\Bigl(\,\frac{d}{ds}\hlift(a+sb,Tπ(w))\at{s=0}\,\Bigr).
\]
Then the map $\map{ξ^{\bar{\H}}}{π^*E\times_E TE}{T(π^*E)}$ given by $ξ^{\bar{\H}}\bigl((a,b),w\bigr) = \bigl(w,B(a,b)w\bigr)$ is the horizontal lift of a linear connection $\ol{\Hor}$ on the pullback  vector bundle $\map{π^*π}{π^*E}{E}$.
\end{theorem}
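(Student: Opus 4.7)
The plan is to verify four things in sequence: (i) $\xi^{\bar H}$ takes values in $T(\pi^*E)$ under the identification $T(\pi^*E)\simeq (T\pi)^*(TE)$; (ii) it is linear in $w$, so that it is a genuine vector bundle morphism on each fibre of $(\pi^*\pi)^*TE\to\pi^*E$; (iii) it is a right splitting of the fundamental sequence of $\pi^*\pi$; and (iv) the resulting connection is linear in the fibre variable $b$ in the $(\Tplus,\Ttimes)$ sense of the pullback bundle. Parts (i)--(iii) are short computations; part (iv) is the main content.

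For (i), the curve $s\mapsto\hlift(a+sb,T\pi(w))$ lies in the fibre $(T\pi)^{-1}(T\pi(w))$, so its derivative at $s=0$ is $T\pi$-vertical and $B(a,b)w=\nu_{T\pi}(\cdot)$ is a well-defined element of $TE$ with $T\pi(B(a,b)w)=T\pi(w)$. Applying Proposition~\ref{several.relations}(1) gives
\[
\tau_E(B(a,b)w)=\nu_\pi\Bigl(T\tau_E\,\frac{d}{ds}\hlift(a+sb,T\pi(w))\at{s=0}\Bigr)=\nu_\pi\bigl(\vlift_\pi(a,b)\bigr)=b,
\]
so $(w,B(a,b)w)$ lies in $T_{(a,b)}(\pi^*E)$. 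Part (iii) is then immediate: the fundamental-sequence projection $p_{\pi^*E}$ sends $(w_1,w_2)\in T_{(a,b)}(\pi^*E)$ to $((a,b),T(\pi^*\pi)(w_1,w_2))=((a,b),w_1)$, whence $p_{\pi^*E}\circ\xi^{\bar H}((a,b),w)=((a,b),w)$. For (ii), linearity in $w$ follows because $\hlift$ is a morphism of vector bundles over $E$ (hence linear in its second argument), $T\pi$ is linear, the $s$-derivative is linear, and $\nu_{T\pi}$ is the inverse of the vector bundle isomorphism $\vlift_{T\pi}$.

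The central step is (iv). I would recognize that $B(a,\cdot)w$ is literally the differential at $a\in E_{\pi(a)}$ of the smooth map
\[
\psi_{T\pi(w)}\colon E_{\pi(a)}\longrightarrow(T\pi)^{-1}(T\pi(w)),\qquad c\longmapsto\hlift(c,T\pi(w)),
\]
where the codomain is regarded as the vector space with operations $(\Tplus,\Ttimes)$. The vertical lift $\vlift_{T\pi}$ canonically identifies this codomain with the $T\pi$-vertical subspace of $T_{\hlift(a,T\pi(w))}TE$, and $\nu_{T\pi}$ is its inverse; hence $B(a,b)w$ is precisely the $(\Tplus,\Ttimes)$-derivative of $\psi_{T\pi(w)}$ at $a$ in the direction $b$. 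Since the differential of a smooth map between two vector spaces is automatically linear, the identities
\[
B(a,b_1+b_2)w=B(a,b_1)w\Tplus B(a,b_2)w,\qquad B(a,\lambda b)w=\lambda\Ttimes B(a,b)w
\]
follow immediately. As a sanity check, the scalar law also drops out from the reparametrization $s\mapsto s/\lambda$ combined with the identity $\nu_{T\pi}(\lambda v)=\lambda\Ttimes\nu_{T\pi}(v)$ from the preliminaries.

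The main obstacle I anticipate is purely notational: the $s$-derivative in the definition of $B(a,b)w$ lives in $T(TE)$ and is computed using the ordinary tangent-bundle structure $\tau_{TE}$, whereas the linearity required in (iv) is measured in the secondary $(\Tplus,\Ttimes)$-structure on $TE$ coming from $T\pi$. The operator $\nu_{T\pi}$ is designed precisely to bridge these two pictures, and once this bridge is articulated, each remaining step reduces to a brief manipulation of the definitions and the properties collected in Section~\ref{section:preliminaries}.
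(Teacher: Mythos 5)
Your proposal is correct, and its parts (i)--(iii) coincide with steps (1)--(3) of the paper's own proof: the same use of Proposition~\ref{several.relations}~(1) to obtain $\tau_E\bigl(B(a,b)w\bigr)=b$, the same observation that $\nu_{T\pi}$ keeps you inside the fibre $(T\pi)^{-1}\bigl(T\pi(w)\bigr)$ so that $T\pi\bigl(B(a,b)w\bigr)=T\pi(w)$, and the same (deliberately brief) treatment of linearity in $w$ --- the paper, too, only writes out homogeneity there. The one genuine divergence is your step (iv). The paper proves the homogeneity identity $B(a,\lambda b)w=\lambda\Ttimes B(a,b)w$ from the reparametrization $\alpha_{\lambda b}(s)=\alpha_b(\lambda s)$ together with $\nu_{T\pi}(\lambda v)=\lambda\Ttimes\nu_{T\pi}(v)$, and then upgrades homogeneity to linearity by invoking smoothness of $B(a,\cdot)w$ at $b=0$. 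You instead promote the paper's motivating discussion to the actual proof: after identifying $(T\pi)^{-1}(\bar{w})$, via $\vlift_{T\pi}$ and $\nu_{T\pi}$, with the $T\pi$-vertical subspace, $B(a,\cdot)w$ is literally the differential of the smooth map $\psi_{\bar{w}}$ between two Banach spaces, hence linear. Both routes are valid; yours delivers additivity and homogeneity in one stroke, and --- worth noting --- it is precisely the mechanism the paper must fall back on in the restricted-domain theorem immediately afterwards, where the homogeneity-plus-smoothness shortcut no longer suffices and additivity is re-proved by hand via the two-parameter map $A(s_1,s_2)=\hlift\bigl(a+s_1b_1+s_2b_2,T\pi(w)\bigr)$; your differential argument subsumes that computation, so your version of Theorem~\ref{linear.connection.0} would transfer to the restricted domain with no extra work. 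One point you should articulate with the same care the paper takes: in your (ii), and in the bridge of (iv), the sums and scalar multiples of the curves $s\mapsto\hlift\bigl(a+sb,\cdot\bigr)$ are taken fibrewise over the \emph{moving} base point $a+sb$, so the phrase ``the $s$-derivative is linear'' must be read through the prolonged operations on $T(TE)$ together with the compatibilities $\nu_{T\pi}(v_1\Tplus v_2)=\nu_{T\pi}(v_1)+\nu_{T\pi}(v_2)$ and $\nu_{T\pi}(\lambda v)=\lambda\Ttimes\nu_{T\pi}(v)$ from Section~\ref{section:preliminaries} (also, $\nu_{T\pi}$ is $\pr_2\circ(\vlift_{T\pi})^{-1}$ rather than the inverse itself); this is a matter of bookkeeping, not a gap.
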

\begin{proof}
Given $(a,b)∈π^*E$ and $w∈T_aE$ we consider the curve $\map{α}{ℝ}{TE}$ given by $α(s)=\hlift\bigl(a+sb,Tπ(w)\bigr)$. In terms of this curve, the vector $B(a,b)w$ is 
\[
B(a,b)w=ν_{Tπ}(\dot{α}(0)).
\]
The curve $α$ depends on $a$, $b$, and $w$. At every step in the proof, when needed, we will indicate with a subscript the dependence that is relevant for the argumentation.
\begin{enumerate}
\item The map $B$ is well defined and $B(a,b)w∈T_bE$. Indeed, from $(Tπ○α)(s)=Tπ(w)$, constant, we get that $TTπ(\dot{α}(0))=0$. Thus $\dot{α}(0)∈\ver{Tπ}$ and we can apply $ν_{Tπ}$ to it. On the other hand, using Proposition~\ref{several.relations} \thetag{1} we get
\begin{align*}
τ_E\bigl(B(a,b)w\bigr)
&=τ_E○ν_{Tπ}(\dot{α}(0))\\
&=ν_π○Tτ_E(\dot{α}(0))\\
&=ν_π\Bigl(\frac{d}{ds}(τ_E○α)(0)\Bigr)\\
&=ν_π\Bigl(\frac{d}{ds}(a+sb)\at{s=0}\Bigr)\\
&=b.
\end{align*}

\item $B(a,b)w$ is linear in $w$. Indeed, we have that $\hlift(a,λv)=λ\hlift(a,v)$, from where $α_w(s)=\hlift(a+sb,Tπ(w))$ satisfies $α_{λw}(t)=λα_w(t)$ and hence $\dot{α}_{λw}(0)=λ\Ttimes\dot{α}_w(0)$. Therefore 
\begin{align*}
B(a,b)(λw)
&=ν_{Tπ}(\dot{α}_{λw}(0))\\
&=ν_{Tπ}(λ\Ttimes\dot{α}_{w}(0))\\
&=λν_{Tπ}(\dot{α}_{w}(0))\\
&=λB(a,b)w.
\end{align*}
Thus $B(a,b)$ is a linear map from $T_aE$ to $T_bE$.

\item $\bigl(w,B(a,b)w\bigr)∈ T_{(a,b)}π^*E$. Indeed, 
\begin{align*}
Tπ(B(a,b)w)
&=Tπ○ν_{Tπ}(\dot{α}(0))\\
&=Tπ○τ_{TE}(\dot{α}(0))\\
&=Tπ(α(0))\\
&=Tπ(w).
\end{align*}
\end{enumerate}

\noindent By \thetag{1}, \thetag{2} and \thetag{3} we get that $\bigl((a,b),w\bigr)\mapsto \bigl(w,B(a,b)w\bigr)$ is the horizontal lift of a connection $\ol{\Hor}$ on $\map{π^*π}{π^*E}{E}$.

\begin{enumerate}
\setcounter{enumi}{3}
\item The connection $\ol{\Hor}$ is linear. We have to prove linearity of $B(a,b)w$ in~$b$. In view of smoothness of $B$ we just need to prove homogeneity, that is, $B(a,λb)w=λ\Ttimes B(a,b)w$ for all $λ∈ℝ$. The curve $α_b(s)=\hlift(a+sb,Tπ(w))$ satisfies $α_{λb}(s)=α_b(λs)$, from where $\dot{α}_{λb}(0)=λ\dot{α}_b(0)$. Hence
\begin{align*}
B(a,λb)w
&=ν_{Tπ}(\dot{α}_{λb}(0))\\
&=ν_{Tπ}(λ\dot{α}_{b}(0))\\
&=λ\Ttimesν_{Tπ}(\dot{α}_{b}(0))\\
&=λ\Ttimes B(a,b)w.
\end{align*}
\end{enumerate}
This ends the proof
\end{proof}

The connection $\ol{\Hor}$ is said to be the linearization of the non linear connection $\Hor$. It is a semibasic linear connection on $π^*E$. Indeed, if $(a,b)∈π^*E$ and $w∈\ver[a]{π}$ then $B(a,b)w=0_b$, so that $ξ^{\bar{\H}}((a,b),w)=(w,0_b)$. 

If $\Hor$ is a linear connection, then the linearization $\ol{\Hor}$ is basic and equal to the pullback of $\Hor$. Indeed, if $(a,b)∈π^*E$ and $w∈T_aE$ then 
\begin{align*}
B(a,b)w
&=ν_{Tπ}\Bigl(\frac{d}{ds}\hlift(a+sb,Tπ(w))\at{s=0}\Bigr)\\
&=ν_{Tπ}\Bigl(\frac{d}{ds}\bigl[\hlift\bigl(a,Tπ(w)\bigr)\Tplus s\Ttimes \hlift\bigr(a,Tπ(w)\bigl)\bigr]\at{s=0}\Bigr)\\
&=ν_{Tπ}\bigl(\vlift_{Tπ}(\hlift(a,Tπ(w)),\hlift(b,Tπ(w)))\bigr)\\
&=\hlift(b,Tπ(w)).
\end{align*}
Therefore $ξ^{\bar{\H}}((a,b),w)=(w,\hlift(b,Tπ(w)))$ and hence $\ol{\Hor}$ is the pullback of $\Hor$.

\paragraph{Coordinate expressions}
Asume that $E$ is finite dimensional. Consider local coordinates $(x^i)$ defined on an open subset $\calu⊂M$ and a local basis $\{e_A\}$ of local sections of $E$ defined on $\calu$. If a point $m∈\calu$ has coordinates $(x^i)$ and the components of a vector $a∈E_m$ in the basis $\{e_A(m)\}$ are $(y^A)$ (i.e. $a=y^Ae_A(m)$) then the coordinates of $a$ are $(x^i,y^A)$. 

Let us find the expression of the map $B$ in local coordinates as above. If the point $a∈E$ has coordinates $(x,y)$, the point $b∈E$ has coordinates $(x,z)$, and the  the tangent vector $w∈T_aE$ has components $(w^i,w^A)$ in the coordinate basis $\{\partial/\partial x^i,\partial/\partial y^A\}$, then the coordinate expression of the curve $α(s)=\hlift(a+sv,Tπ(w))$ is
\[
α(s)=\bigl(x^i,y^A+sz^A,\ w^i,-Γ^A_i(x,y+sz)w^i\bigr).
\]
Taking the derivative at $s=0$ we get
\begin{align*}
\dot{α}(0)=\bigl(x^i,y^A,w^i,-Γ^A_i(x,y)w^i,0,z^A,0,-Γ^A_{iB}(x,y)z^Bw^i\bigr)
\end{align*}
where $Γ^A_{iB}(x,y)=\partial Γ^A_i/\partial y^B(x,y)$. Finally applying $ν_{Tπ}$ we get
\[
B(a,b)w=\bigl(x^i,z^A,w^i,-Γ^A_{iB}(x,y)z^Bw^i\bigr).
\]
This expression shows clearly that $B(a,b)w$ is linear both in $b$ and $w$.

An equivalent expression is 
\[
B(a,b)
\Bigl(w^i\pd{}{x^i}\at{(x,y)}+w^A\pd{}{y^A}\at{(x,y)}\Bigr)
=
w^i\Bigl(\pd{}{x^i}\at{(x,z)}-Γ^A_{iB}(x,y)z^B\pd{}{y^A}\at{(x,z)}\Bigr),
\]
and thus the horizontal lift is given by
\begin{align*}
ξ^{\bar{\H}}&\Bigl((a,b), \Bigl(w^i\pd{}{x^i}\at{(x,y)}+w^A\pd{}{y^A}\at{(x,y)}\Bigr)\Bigr)=\\
&=
\biggl(
w^i\pd{}{x^i}\at{(x,y)}+w^A\pd{}{y^A}\at{(x,y)}\,,\, w^i\pd{}{x^i}\at{(x,z)}-Γ^A_{iB}(x,y)z^Bw^i\pd{}{y^A}\at{(x,z)}
\biggr).
\end{align*}
A local basis of the horizontal distribution is 
\begin{align*}
\bar{H}_i
&=
\biggl(
\pd{}{x^i}\at{(x,y)}
\,,\,
\pd{}{x^i}\at{(x,z)}-Γ^A_{iB}(x,y)z^B\pd{}{y^A}\at{(x,z)}
\biggr),
\\
\bar{H}_A
&=
\biggl(
\pd{}{y^A}\at{(x,y)}
\,,\,
0_{(x,z)}
\biggr).
\end{align*}
Together with $\bar{V}_A=(0_{(x,y)},\partial/\partial z^A\big|_{(x,z)})$ they form a local basis of vector fields on the manifold $π^*E$.

In connection theory it is customary to indicate locally a connection as the Pfaff system given by the annihilator of the horizontal distribution. For the original connection $\Hor$ such a Pfaff system is $dy^A+Γ^A_i(x,y)dx^i=0$. For the linearized connection $\ol{\Hor}$ it is given by $dz^A+Γ^A_{iB}(x,y)z^Bdx^i=0$. Thus, symbolically, the linearization procedure consist in
\[
\Hor:\ \Bigl\{dy^A+Γ^A_i(x,y)dx^i=0
\quad\rightsquigarrow\quad
\ol{\Hor}:\ \Bigl\{dz^A+Γ^A_{iB}(x,y)z^Bdx^i=0.
\]
where we recall that $\dsize Γ^A_{iB}=\pd{Γ^A_i}{y^B}$.

\subsection*{Restricted domain}
In many situations the initial non linear connection is not defined (or is not smooth) on the whole vector bundle~$E$. A typical example occurs in Finsler geometry where $E=TM$ and the connection is defined by a Finsler metric. The most notable Finsler connections are homogeneous (generally non linear) and are smooth on the slit tangent bundle $\map{\slit{τ}_M}{\slit{T}M}{M}$, i.e. on the tangent bundle to the base manifold without the zero section.

In this respect, at step \thetag{4} in the proof of Theorem~\ref{linear.connection.0}, we have assumed smoothness of $B(a,b)$ at $b=0$ and hence linearity followed from homogeneity. In the case of a restricted domain (not including the zero section) Theorem~\ref{linear.connection.0} only proves homogeneity of the `linearized' connection $\ol{\Hor}$. However, as it is clear from the coordinate expressions above, the connection coefficients are indeed linear in the variable $z$. We will next prove that the connection  $\ol{\Hor}$ is in fact linear even in this more general case.

\begin{theorem}
Let $C⊂E$ be a submanifold of $E$ such that $π^{-1}(m)∩C$ is a non-empty open subset of $E_m$ for every $m∈M$, and denote by $\map{π_C}{C}{M}$ the restriction of $π$ to $C$. Given a nonlinear connection $\Hor$ on the bundle $\map{π}{E}{M}$, smooth in the submanifold $C$, the map $B$ defined as in Theorem~\ref{linear.connection.0} provides a linear connection on the pullback bundle $π^*E$ with domain $π_C^*E=\set{(a,b)∈π^*E}{a∈C}$.     
\end{theorem}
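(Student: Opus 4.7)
The strategy is to re-examine the proof of Theorem~\ref{linear.connection.0} and rework only the step that previously used global smoothness of $\hlift$ on $E$. Steps \thetag{1}--\thetag{3} of that proof go through verbatim: each uses only the smoothness of the curve $s \mapsto \hlift(a + sb, Tπ(w))$ in a neighborhood of $s = 0$, and this smoothness is guaranteed in the present setting because $C \cap E_m$ is open in $E_m$ and contains $a$, so for every $b \in E_m$ the vector $a + sb$ remains in $C$ for $|s|$ sufficiently small. In this way one still obtains $B(a,b)w \in T_b E$, linearity in $w$, and the horizontality condition $Tπ(B(a,b)w) = Tπ(w)$, producing an Ehresmann connection $\ol{\Hor}$ on $π_C^* E$.

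The substantive step is \thetag{4}, the linearity of $B(a,b)w$ in $b$. In the original proof this was deduced from homogeneity combined with smoothness of $B$ on a full neighborhood of $b = 0$, but in the restricted setting $a + sb$ need not remain in $C$ for large $|s|$, so smoothness of $B$ globally in $b$ is not a priori available. Instead I would identify $b \mapsto B(a,b)w$ directly as a Fréchet derivative: for fixed $a \in C$ and $\bar{w} = Tπ(w)$, the map
\[
ψ_{\bar{w}} \colon C \cap E_m \longrightarrow (Tπ)^{-1}(\bar{w}), \qquad c \longmapsto \hlift(c, \bar{w}),
\]
is smooth and takes values in the fibre of the vector bundle $\map{Tπ}{TE}{TM}$, which is a Banach vector space with respect to $\Tplus$ and $\Ttimes$. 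Since $C \cap E_m$ is open in the vector space $E_m$, the Fréchet derivative $Dψ_{\bar{w}}(a) \colon E_m \to (Tπ)^{-1}(\bar{w})$ exists and is, by its very definition, a bounded linear map between vector spaces. Unwinding the canonical identification of $T_{\hlift(a,\bar{w})}\bigl((Tπ)^{-1}(\bar{w})\bigr)$ with $(Tπ)^{-1}(\bar{w})$ implemented through $\vlift_{Tπ}$ and its inverse $ν_{Tπ}$, one reads off $B(a,b)w = Dψ_{\bar{w}}(a)\cdot b$, which yields the desired $(\Tplus, \Ttimes)$-linearity of $B(a,b)w$ in $b$.

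The main obstacle is not any substantial computation but the bookkeeping of vector space structures: one must check that the linearity produced by the Fréchet derivative is with respect to $(\Tplus, \Ttimes)$ rather than the standard structure on $TE$. This is automatic because the fibre $(Tπ)^{-1}(\bar{w})$ is a linear subspace of $TE$ precisely for the $(\Tplus, \Ttimes)$-structure, which is also the structure already used in the limit definition of $B(a,b)w$ stated in Section~\ref{section:linearization}. As an independent sanity check, the coordinate expression $B(a,b)w = (x^i, z^A, w^i, -Γ^A_{iB}(x,y)\, z^B w^i)$ displayed just before the theorem is manifestly linear in the fibre coordinates $z^A$ of $b$, confirming the conclusion.
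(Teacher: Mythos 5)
Your proof is correct, but it takes a genuinely different route from the paper's. The paper stays entirely inside the $\nu_{T\pi}$/curve formalism of Theorem~\ref{linear.connection.0}: after observing (as you do) that steps \thetag{1}--\thetag{3} and the homogeneity computation survive restriction to $C$ because they only require $a+sb\in C$ for small $s$, it proves \emph{additivity} directly by introducing the two-parameter map $A(s_1,s_2)=\hlift(a+s_1b_1+s_2b_2,T\pi(w))$, applying the chain rule $\frac{d}{ds}A(s,s)\big|_{s=0}=\frac{\partial A}{\partial s_1}(0,0)+\frac{\partial A}{\partial s_2}(0,0)$, and pushing through $\nu_{T\pi}$ to get $B(a,b_1+b_2)w=B(a,b_1)w\Tplus B(a,b_2)w$; together with the already-established homogeneity this yields linearity. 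You instead identify $b\mapsto B(a,b)w$ outright as the Fr\'echet derivative $D\psi_{\bar w}(a)$ of the fibre map $\psi_{\bar w}$ at the interior point $a$ of the open set $C\cap E_m$, so that linearity in $b$ (with respect to $(\Tplus,\Ttimes)$, as you correctly insist) is automatic and homogeneity never needs separate treatment. Both arguments hinge on the same analytic fact: that $\hlift$ is jointly (Fr\'echet) differentiable in the fibre variable at points of $C$, not merely radially differentiable --- the paper uses this implicitly through the two-variable chain rule for $A$, while you invoke it explicitly when asserting that $D\psi_{\bar w}(a)$ exists and agrees with the directional limit defining $B$. Your version is more conceptual and matches the motivating discussion at the start of Section~\ref{section:linearization} (where $B$ was introduced precisely as ``the best linear approximation'' of $\psi_{\bar w}$); what the paper's computation buys is self-containedness: it avoids the one step you assert rather than prove, namely that $(T\pi)^{-1}(\bar w)$ with its $(\Tplus,\Ttimes)$ structure is a Banach space sitting as a split submanifold of $TE$, that smoothness of $\hlift$ on $\pi_C^*(TM)$ restricts to smoothness of $\psi_{\bar w}$ as a map into this Banach space, and that $\vlift_{T\pi}(v,\cdot)$ and $\nu_{T\pi}$ implement the canonical identification of each tangent space of the fibre with the fibre itself. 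These claims are true and routine (the last is immediate from the definitions in Section~\ref{section:preliminaries}), but in the Banach setting they merit a sentence of justification each; with that bookkeeping filled in, your argument is a complete and arguably cleaner proof of the theorem.
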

\begin{proof}
It can be easily checked that all the steps in the proof of Theorem~\ref{linear.connection.0} remains valid if the domain of $\Hor$ is the submanifold $C$ of $E$. To prove linearity we just need to show that the family $B$ satisfies the property $B(a,b_1+b_2)w=B(a,b_1)w\Tplus B(a,b_2)w$.

Consider the curve $α_b(s)=\hlift(a+sb,Tπ(w))$ as above, and the map $A(s_1,s_2)=\hlift(a+s_1b_1+s_2b_2,Tπ(w))$. Then $A(s,s)=α_{b_1+b_2}(s)$, $A(s,0)=α_{b_1}(s)$ and $A(0,s)=α_{b_2}(s)$, so that 
\begin{align*}
B(a,b_1+b_2)w
&=ν_{Tπ}\left(\frac{d}{ds}α_{b_1+b_2}(s)\at{s=0}\right)\\
&=ν_{Tπ}\left(\frac{d}{ds}A(s,s)\at{s=0}\right)\\
&=ν_{Tπ}\left(\pd{A}{s_1}(0,0)+\pd{A}{s_2}(0,0)\right)\\
&=ν_{Tπ}\left(\frac{d}{ds}α_{b_1}(s)\at{s=0}+\frac{d}{ds}α_{b_2}(s)\at{s=0}\right)\\
&=B(a,b_1)w\Tplus B(a,b_2)w.
\end{align*}
Therefore the connection defined by $B$ is not only homogeneous but linear.
\end{proof}

As a consequence, the usual and most notable connections in Finsler geometry~\cite{Bao} (the Berwald, Cartan, Chern-Rund and Hashiguchi connections), which are homogeneous but nonlinear, can be properly and conveniently be defined as linear connections on the pullback of the tangent bundle, with domain $\slit{τ}_M^*(TM)$.

\section{Natural prolongation}
\label{section:natural.prolongation}

The procedure described in this paper assigning a linear connection to a non linear one can be properly understood as a first order prolongation in the context of natural operations on fibered manifolds~\cite{NatOp}. 

A systematic study of the functorial prolongation of a connection on a (non necessarily linear) fiber bundle $B\to M$ to a connection on the vertical bundle has been carried out in~\cite{DouMi2, DouMi1, DouMi3}. In~\cite{DouMi1} it is proved that a bundle functor $G$ on the category of fibred manifolds admits a functorial operator lifting connections on $B\to M$ to connections on $GB\to B$ if and only if the functor $G$ is a trivial bundle functor $B\mapsto GB=B\times W$, for some manifold $W$. As a consequence of this result, since the vertical functor is not a trivial functor, we deduce that there is no natural operator transforming connections on $B\to M$ to connections on $\Ver{B}\to B$. However, as it is already remarked in~\cite{DouMi1}, the obstruction for the existence of such a natural operator may disappear if we consider some additional structure. In~\cite[\S 46.10]{NatOp}, by restricting to the subcategory of affine bundles, it is shown that there exists a 1-parameter family of first order natural operators (natural on local isomorphisms of affine bundles) transforming connections on $\map{π}{E}{M}$ into connections on $\map{τ_E^\V\equivτ_E|_{\Ver{E}}}{\Ver{E}}{E}$. 

The pullback bundle $\map{π^*π}{π^*E}{E}$ can be identified (via the vertical lifting $\vlift_π$) with the vertical bundle $\map{τ_E^\V}{\Ver{E}}{E}$. Under this identification the linearized connection can be considered as a connection on the vertical bundle. 

Our linearization procedure transforms a connection on a vector bundle (a particular case of an affine bundle) into a linear connection on the pullback bundle, or equivalently to the vertical bundle. From the definition (Theorem~\ref{linear.connection.0}) it is clear that our construction is a natural first order differential operator. Therefore it should coincide with one of the members of the above mentioned family.

In~\cite{linearization.1} we provided an intrinsic expression for the members of that family. This expression was found by `brute force' from the coordinate expression given in~\cite{NatOp}, but no geometrical understanding of the situation was provided. In this section we will find in more clear intrinsic terms the above family and we will prove that these are all natural first order prolongations of the given connection. We will try to keep it as elementary as possible, and we refer to the reader to~\cite{NatOp} for further details of the theory of natural operators.

\medskip

The functor that we are considering here is the pullback functor $\operatorname{Pb}$ from the category of vector bundles to the  category of vector bundles. The functor $\operatorname{Pb}$ assigns the vector bundle $\operatorname{Pb}(E)=π^*E$ to a vector bundle $\map{π}{E}{M}$, and the morphism $\operatorname{Pb}(ϕ)=(ϕ,ϕ)$ to a vector bundle morphism $\map{ϕ}{E}{E'}$ between vector bundles.

For every real number $λ∈ℝ$ we can define the family of maps $B_λ$ by 
\[
B_λ(a,b)w = B(a,b)w+λ\vlift_π(b,κ(w))
\]
for $(a,b)∈π^*E$ and $w∈T_aE$. It is easy to see that $\bigl((a,b),w\bigr) \mapsto \bigl(w,B_λ(a,b)w\bigr)$ is an Ehresmann connection on $π^*π$, which will be denoted symbolically by $\ol{\Hor}+λκ^\V$. 

\begin{theorem}
All first order operators transforming a connection on a vector bundle into a connection on the pullback bundle which are natural on the local isomorphisms of vector bundles form the 1-parameter family $\ol{\Hor}+λκ^\V$, for $λ∈ℝ$.
\end{theorem}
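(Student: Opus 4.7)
The statement splits into existence, that every $\ol{\Hor}+λκ^\V$ is a natural first-order operator, and uniqueness, that no other one exists. I would establish them in turn.

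For existence, the linearization $\ol{\Hor}$ was built in Theorem~\ref{linear.connection.0} using only the horizontal lift, the secondary vector bundle operations $\Tplus$ and $\Ttimes$, and the vertical projection $ν_{Tπ}$. Each of these commutes with morphisms of vector bundles and hence with the pullback functor $\operatorname{Pb}$, so $\ol{\Hor}$ is natural; it is first order because the defining formula extracts only $\partial_s\hlift(a+sb,\,\cdot\,)\at{s=0}$. The connector $κ$ and the vertical lifting $\vlift_π$ are likewise natural and first order in the input connection, so the correction $\vlift_π(b,κ(w))$ is a natural first-order tensor and each $\ol{\Hor}+λκ^\V$ is a natural first-order operator.

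For uniqueness, let $\hlift^N$ be an arbitrary natural first-order horizontal lift on $π^*π$. Since both $\hlift^N$ and $\ol{\hlift}$ split the fundamental sequence of $π^*π$, their difference $\Delta:=\hlift^N-\ol{\hlift}$ takes values in $\ver{π^*π}$, and via the identification $\ver[(a,b)]{π^*π}\simeq E_{π(a)}$ provided by $\vlift_π(b,\,\cdot\,)$ we obtain, naturally in the connection, a linear map $\Delta(a,b)\colon T_aE\to E_{π(a)}$. I would then invoke the standard equivalence between natural first-order operators on connections and equivariant maps on the corresponding 1-jet fibers~\cite{NatOp}, which turns the classification of $\Delta$ into an invariant-theoretic problem for the jet group of vector bundle isomorphisms. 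At a base point where the connection coefficients $Γ^A_i$ have been normalized to vanish by the jet group action, the surviving data are the coordinates $(y,b,w)$ and the fiber derivatives $Γ^A_{iB}$; the residual $GL(V)\times GL(T_mM)$-equivariance on bilinear tensors of the appropriate type leaves a single independent invariant, which reassembles into the canonical expression $λ\vlift_π(b,κ(w))$.

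The main obstacle I expect is running the invariant-theoretic step over Banach vector bundles, since~\cite{NatOp} is phrased in finite dimensions. What is actually needed is the one-dimensionality of $GL(V)$-equivariant maps of the relevant bilinear type, which persists in the Banach category by evaluating on rank-one tensors and using the transitivity of $GL(V)$ on them up to scalar. Granted this, every natural first-order prolongation takes the form $\ol{\Hor}+λκ^\V$, and the theorem follows.
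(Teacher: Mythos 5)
Your opening move — writing the difference of the arbitrary natural connection and the linearization as $B'(a,b)w = B(a,b)w + \vlift_\pi\bigl(b,D(a,b)w\bigr)$ with $\map{D(a,b)}{T_aE}{E_{\pi(a)}}$, and splitting $D$ into horizontal and vertical parts via $v=T\pi(w)$, $c=\kappa(w)$ — is exactly the paper's first step. But from there the routes diverge, and yours has a genuine gap given the setting of the theorem. The paper works throughout in the Banach category, and the declared purpose of this theorem (see the introduction) is precisely to give a direct proof \emph{avoiding} the jet-group classification of~\cite{NatOp}, which is intrinsically finite dimensional. Your plan reinstates that machinery: the reduction of a natural first-order operator to an equivariant map on 1-jet fibers rests on regularity, finite-order and homogeneous-function theorems (Peetre-type results, polynomial expansions in jet coordinates) that are not available off the shelf for Banach bundles. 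Your proposed repair covers only the final linear-algebra lemma (one-dimensionality of the space of equivariant maps, via rank-one tensors), not the reduction itself, so the Banach-general statement is never reached. Even in finite dimensions your sketch of the equivariant step is incomplete: after normalizing $\Gamma^A_i$ at a point, the surviving 1-jet data include the base derivatives $\partial\Gamma^A_i/\partial x^j$ as well as $\Gamma^A_{iB}$, and a priori the operator may depend nonlinearly on all of these; ruling that out is the actual content of the classification, and it is left unargued.

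The paper's uniqueness argument is what makes the Banach case go through, and it is more elementary than invariant theory on jet fibers. It passes to a trivial bundle $B\times F$ of Banach spaces and exploits equivariance under three concrete families of vector bundle isomorphisms: the base scalings $(x,y)\mapsto(kx,y)$ over $x\mapsto kx$ force $\bar D_\H(kx,y,z)(kv)=\bar D_\H(x,y,z)v$, so $\bar D_\H$ is homogeneous of degree $-1$ and, being smooth, vanishes; the fiber scalings $(x,y)\mapsto(x,ky)$ force $\bar D_\V$ to be homogeneous of degree $0$ in $(y,z)$, hence independent of them; and arbitrary fiber automorphisms $(x,y)\mapsto(x,Ay)$ force $\bar D_\V(x)$ to commute with every invertible endomorphism of $F$, hence to be a scalar multiple of the identity, the scalar being constant by one more base scaling. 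Each step uses only smoothness plus homogeneity, so nothing finite dimensional is invoked, and the conclusion $D(a,b)w=\lambda\kappa(w)$, i.e.\ the family $\ol{\Hor}+\lambda\kappa^\V$, follows directly. To salvage your plan you would either have to restrict the theorem to finite rank bundles (weakening it to the already-known result of~\cite{NatOp}) or replace the jet-fiber reduction by direct scaling arguments of this kind — at which point you would have rederived the paper's proof.
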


\begin{proof}
The difference between two connections on $π^*E$ is of the form 
\[
\hlift_2\bigl((a,b),w\bigr)-\hlift_1\bigl((a,b),w\bigr) 
= \bigl(w, B_2(a,b)w\bigr)-\bigl(w, B_1(a,b)w\bigr)
= \bigl(0,Δ(a,b)w\bigr)
\]
for $Δ(a,b)$ a linear map $\map{Δ(a,b)}{T_aE}{T_bE}$ with values in the vertical bundle. We can write $Δ$ in the form $Δ(a,b)w=\vlift_π(b,D(a,b)w)$ where $D(a,b)$ is a linear map $\map{D(a,b)}{T_aE}{E_{π(a)}}$.

Taking $\hlift_2$ an arbitrary connection on $π^*E$ and $\hlift_1$ the linearized connection of $\Hor$, the associated maps $B'$ and $B$ are related by $B'(a,b)w=B(a,b)w+\vlift_π\bigl(b, D(a,b)w\bigr)$ for linear maps $\map{D(a,b)}{T_aE}{E_{π(a)}}$ depending smoothly on the point $(a,b)∈π^*E$. Taking the horizontal and vertical components of $w$ we can write $w=\hlift(a,v)+\vlift_π(a,c)$ with $v=Tπ(w)$ and $c=κ(w)$. Thus we have $D(a,b)w=D_\H(a,b)v+D_{\V}(a,b)c$, where $D_\H(a,b)v=D(a,b)\hlift(a,v)$ and $D_{\V}(a,b)c=D(a,b)\vlift_π(a,c)$. 

It follows that $B'$ is a natural operator if and only if both $D_{\H}$ and $D_{\V}$ are natural transformations, which means that for every local diffeomorphism $(ϕ,φ)$ of $E$ they satisfy
\begin{align*}
ϕ\bigl(D_{\H}(a,b)v\bigr)&=D_{\H}\bigl(ϕ(a),ϕ(b)\bigr)Tφ(v),\\
ϕ\bigl(D_{\V}(a,b)c\bigr)&=D_{\V}\bigl(ϕ(a),ϕ(b)\bigr)ϕ(c).
\end{align*}
We now prove that these conditions imply that $D_{\H}=0$ and that $D_\V$ is a constant multiple of $(a,b)\mapsto \id_{T_{π(a)}M}$.

For that we just need to consider the trivial bundle $\map{\pr_1}{E=B\times F}{B}$, where $B$ and $F$ are Banach spaces. An element $((x,y),(x,z))$ of $π^*E$ will be written $(x,y,z)∈B\times F\times F$. The map $D_\H$ will be of the form $D_\H(x,y,z)(x,v)=\bigl(x,\bar{D}_\H(x,y,z)v\bigr)$ with $\bar{D}_\H(x,y,z)$ a linear map from $B$ to $F$ depending smoothly on $(x,y,z)$. For every $k∈ℝ$, the invariance under the morphism $ϕ(x,y)=(kx,y)$ over the map $φ(x)=kx$ reads $\bar{D}_{\H}(kx,y,z)(kv)=\bar{D}_{\H}(x,y,z)v$, and hence it implies $\bar{D}_{\H}$ is homogeneous of degree $-1$ with respect to the variable $x$. Smoothness of $\bar{D}_\H$ implies that it must vanish, hence $D_\H=0$.

The map $D_\V$ will be of the form $D_\V(x,y,z)(x,u)=\bigl(x,\bar{D}_\V(x,y,z)u\bigr)$ with $\bar{D}_\V(x,y,z)$ a linear map from $F$ to $F$ depending smoothly on $(x,y,z)$. For $k∈ℝ$, we take the morphism $ϕ(x,y)=(x,ky)$ over the identity $ϕ(x)=x$. The invariance condition for $D_{\V}$ reads $k\bar{D}_\V(x,y,z)u=\bar{D}_{\V}(x,ky,kz)ku$, or in other words $\bar{D}_\V(x,y,z)u=\bar{D}_{\V}(x,ky,kz)u$. Thus $\bar{D}_\V$ is homogeneous of degree 0 in the variables $(y,z)$. Smoothness of $\bar{D}_\V$ implies that it is independent on $(y,z)$, and hence it depends only on the base point, i.e. $D_\V(a,b)\equiv D_\V(m)$ with $m=π(a)=π(b)$. 

For any linear automorphism $A$ of $F$ we consider the diffeomorphism $ϕ(x,y)=(x,Ay)$. The invariance condition for $D_\V$ reads $A\bar{D}_\V(x)u=\bar{D}_\V(x)Au$. Therefore the endomorphism $\bar{D}_\V(x)$ commutes with any regular endomorphism, from where it follows that it is a multiple the identity. Thus $D_\V(a,b)=α(m)\id_{E_m}$ for some smooth function $α∈\cinfty{M}$. Finally taking again the morphism $ϕ(x,y)=(kx,y)$ we get $α(kx)u=α(x)u$, so that $α$ is homogeneous of degree 0 and smooth, and hence constant, $α(x)=λ$.

We conclude that $D(a,b)w=λκ(w)$ for some constant $λ∈ℝ$, and therefore $B'(a,b)w=B(a,b)w+\vlift_π(b,λκ(w))$, for all $(a,b)∈π^*E$ and all $w∈T_aE$.
\end{proof}

For $λ=0$ we obtain the connection $\ol{\Hor}$, which is linear. For $λ≠0$ the connection $\ol{\Hor}+λκ^\V$ is not a linear connection but an affine connection on the bundle $π^*E$, with its canonical affine structure, whose associated linear connection is precisely $\ol{\Hor}$. Indeed, 
\begin{align*}
B_λ(a,b+b')w
&=B(a,b+b')w+\vlift_π(b+b',λκ(w))\\
&=\Bigl(B(a,b)w\Tplus B(a,b')w\Bigr)+\Bigl(\frac{d}{ds}(b+b'+sλκ(w))\at{s=0}\Bigr)\\
&=\Bigl(B(a,b)w\Tplus B(a,b')w\Bigr)+\Bigl(\vlift_π(b,λκ(w))\Tplus 0_{b'}\Bigr)\\
&=\Bigl(B(a,b)w+\vlift_π(b,λκ(w))\Bigr)\Tplus\Bigl(B(a,b')w+0_{b'}\Bigr)\\
&=B_λ(a,b)w \Tplus B(a,b')w,
\end{align*}
where in the first step we have used linearity of $B(a,b)$ in the argument $b$, and in the third one we have used the interchange law (see Section~\ref{section:preliminaries}).

Also, the linearized connection $\ol{\Hor}$ is semibasic, while the other members in the family are not. Indeed, if $w∈T_aE$ is vertical then $B(a,b)w=0$, and if we write $w=\vlift_π(a,c)$ for some $c∈E$, then
\[
B_λ(a,b)w=B(a,b)w+λ\vlift_π(b,κ(w))=0+λ\vlift_π(b,c)=λ\vlift_π(b,c),
\]
which does not identically vanishes, except for $λ=0$.

We have proved the following statement.
\begin{proposition}
\label{natural+=linearization}
The linearization $\ol{\Hor}$ of a connection $\Hor$ can be characterized by any of the following properties:
\begin{itemize}
\item $\ol{\Hor}$ is the only natural first order prolongation of $\Hor$ which is linear.
\item $\ol{\Hor}$ is the only natural first order prolongation of $\Hor$ which is semibasic.
\end{itemize}
\end{proposition}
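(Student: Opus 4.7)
The plan is to reduce the statement to the classification just proved: every natural first order prolongation of $\Hor$ is a member of the family $\ol{\Hor}+λκ^\V$, so the proposition amounts to showing that the value $λ=0$ is singled out both by linearity and by semibasicity.

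For the linearity clause, I would directly test additivity of $B_λ(a,b)w$ in the argument $b$. Expanding $B_λ(a,b+b')w=B(a,b+b')w+\vlift_π(b+b',λκ(w))$ and using three ingredients — linearity of $B$ in its second slot from Theorem~\ref{linear.connection.0}, the elementary identity $\vlift_π(b+b',c)=\vlift_π(b,c)\Tplus 0_{b'}$ read off directly from the curve definition of $\vlift_π$, and the interchange law from Section~\ref{section:preliminaries} relating $+$ and $\Tplus$ — one arrives at
\[
B_λ(a,b+b')w = B_λ(a,b)w\Tplus B(a,b')w.
\]
Because the correction term attaches to $b$ but not to $b'$, this identity shows that $B_λ$ defines an affine connection whose underlying linear connection is $\ol{\Hor}$; hence the only linear member of the family is obtained for $λ=0$.

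For the semibasicity clause, I would evaluate $B_λ(a,b)$ on a vertical vector $w=\vlift_π(a,c)∈\ver[a]{π}$. The verification following Theorem~\ref{linear.connection.0} gives $B(a,b)w=0_b$, while $κ(w)=c$ by definition of the connector, so
\[
B_λ(a,b)\vlift_π(a,c)=λ\vlift_π(b,c).
\]
This vanishes for every vertical $w$ exactly when $λ=0$, so semibasicity again picks out the linearization uniquely.

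The main (and only) subtlety is keeping the two additions $+$ and $\Tplus$ on $TE$ straight in the linearity computation; once the interchange law is applied correctly the argument is essentially mechanical. No deeper obstruction arises because the hard work — identifying the 1-parameter family of first order prolongations — has already been carried out by the preceding classification theorem, and it only remains to read off the two distinguishing properties from the explicit formula for $B_λ$.
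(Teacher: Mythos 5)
Your proposal is correct and follows essentially the same route as the paper: it invokes the classification $\ol{\Hor}+λκ^\V$ and then verifies the affine identity $B_λ(a,b+b')w=B_λ(a,b)w\Tplus B(a,b')w$ using exactly the same three ingredients the paper uses (linearity of $B$ in $b$, the curve identity $\vlift_π(b+b',c)=\vlift_π(b,c)\Tplus 0_{b'}$, and the interchange law), concluding that only $λ=0$ is linear. Your semibasicity check is likewise identical to the paper's: evaluating on $w=\vlift_π(a,c)$ gives $B_λ(a,b)w=λ\vlift_π(b,c)$, which vanishes for all vertical $w$ precisely when $λ=0$.
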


\medskip

In the finite dimensional case, in a local coordinate system the expression of the maps $B$ for the connection $\ol{\Hor}+λκ^\V$ is
\begin{align*}
B_λ(a,b)
\Bigl(w^i\pd{}{x^i}&+w^A\pd{}{y^A}\Bigr)\at{(x^i,y^A)}
=\\
{}={}&\left(w^i\pd{}{x^i}+\Bigl(λ\bigl[w^A-Γ^A_i(x,y)w^i\bigr]-Γ^A_{iB}(x,y)z^Bw^i\Bigr)\pd{}{y^A}\right)\at{(x^i,z^A)}.
\end{align*}
It is clear that this connection is affine (the connection coefficients are affine functions of $z^A$) and is linear only in the case $λ=0$, which is the case of the linearization. The annihilator of the horizontal distribution is spanned by the 1-forms
\[
dz^A+Γ^A_{iB}(x,y)z^Bdx^i+λ\bigl(dy^A-Γ^A_i(x,y)dx^i\bigr).
\]

\section{The covariant derivative}
\label{section:covariant.derivative}

In this section we will find an explicit expression of the covariant derivative associated to the linearization of an Ehresmann connection in terms of brackets of vector fields.

As a preliminary result we have the following expression for the associated Dombrowski connection map. 

\begin{proposition}
The connection map $\map{\bar{κ}}{T(π^*E)}{π^*E}$ for the linearized connection $\ol{\Hor}$ is given by 
\[
\bar{κ}(w,w')=\bigl(a,ν_π\bigl(w'-B(a,b)w\bigr)\bigr),
\]
for $(w,w')∈T_{(a,b)}(π^*E)$.
\end{proposition}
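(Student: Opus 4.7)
The strategy is to compute the connection form $\bar\vartheta$ directly from the horizontal decomposition and then take second projections. First I would split $(w,w') \in T_{(a,b)}(\pi^*E)$ into horizontal and vertical parts with respect to $\ol{\Hor}$. By Theorem~\ref{linear.connection.0} the horizontal part is $\xi^{\bar{\H}}((a,b), T(\pi^*\pi)(w,w')) = (w, B(a,b)w)$, so subtracting componentwise in the ordinary tangent bundle $\tau_{\pi^*E}$ yields the vertical part $(0_a, w' - B(a,b)w)$. A brief check confirms that $w' - B(a,b)w$ lies in $\ver[b]{\pi}$: it sits in $T_bE$ by step~(1) of the proof of Theorem~\ref{linear.connection.0}, and applying $T\pi$ gives $T\pi(w') - T\pi(w) = 0$ via the defining relation for $T(\pi^*E) \simeq (T\pi)^*(TE)$ together with the identity $T\pi(B(a,b)w) = T\pi(w)$ from step~(3) of the same proof.

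Second, I would identify $\vlift_{\pi^*\pi}$ explicitly. The fiber of $\pi^*\pi$ over $a \in E$ is $\{a\} \times E_{\pi(a)}$ with addition inherited from $E_{\pi(a)}$, so differentiating the curve $s \mapsto (a, b+sc)$ and using the identification $T(\pi^*E) \simeq (T\pi)^*(TE)$ gives
\[
\vlift_{\pi^*\pi}\bigl((a,b),(a,c)\bigr) = \bigl(0_a, \vlift_\pi(b,c)\bigr).
\]
Inverting on the vertical part just computed, and writing $w' - B(a,b)w = \vlift_\pi(b, \nu_\pi(w'-B(a,b)w))$, I obtain $\bar\vartheta(w,w') = ((a,b),(a, \nu_\pi(w'-B(a,b)w)))$. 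Projecting onto the second factor yields the stated formula for $\bar\kappa$.

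The argument is essentially formal once the notation is in place, so I do not anticipate a serious obstacle; the only care required is bookkeeping, namely keeping the two vector bundle structures on $T(\pi^*E)$ (i.e., $\tau_{\pi^*E}$ versus $T(\pi^*\pi)$) separate and performing the subtraction $w' - B(a,b)w$ in the ordinary fiber $T_bE$ rather than in any secondary structure. Beyond this, no identity outside those collected in Section~\ref{section:preliminaries} is needed.
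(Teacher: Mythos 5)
Your proposal is correct and follows essentially the same route as the paper: both identify $\vlift_{π^*π}\bigl((a,b),(a,c)\bigr)=\bigl(0_a,\vlift_π(b,c)\bigr)$ by differentiating $s\mapsto(a,b+sc)$, extract the vertical component $(0_a,\,w'-B(a,b)w)$ by subtracting the horizontal lift $\bigl(w,B(a,b)w\bigr)$ in the ordinary tangent bundle structure, and then apply the resulting vertical projection $ν_{π^*π}$. Your explicit check that $w'-B(a,b)w$ lies in $\ver[b]{π}$ is a small elaboration the paper leaves implicit, but it does not change the argument.
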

\begin{proof}
The vertical lift on the bundle $\map{π^*π}{π^*E}{E}$ is given by
\[
\vlift_{π^*π}\bigl((a,b),(a,c)\bigr)=\bigl(0_a,\vlift_π(b,c)\bigr).
\]
Indeed, 
\[
\vlift_{π^*π}\bigl((a,b),(a,c)\bigr)
=\frac{d}{ds}\bigl((a,b)+s(a,c)\bigr)\at{s=0}
=\frac{d}{ds}(a,b+sc)\bigr)\at{s=0}
=\bigl(0_a,\vlift(b,c)\bigr).
\]
Therefore the vertical projection is $ν_{π^*π}(0_a,w)=(a,ν_π(w))$ for $w∈T_bE$ vertical.

The vertical projector $P_{\bar{\V}}$ for the linearized connection $\ol{\Hor}$ is given by
\[
P_{\bar{\V}}(w,w')
=(w,w')-ξ^{\bar{\H}}\bigl((a,b),w\bigr)
=(w,w')-(w,B(a,b)w)
=(0_a,w'-B(a,b)w).
\]
Hence
\[
\bar{κ}(w,w')=ν_{π^*π}(P_\V(w,w'))=ν_{π^*π}(0_a,w'-B(a,b)w)=\bigl(a,ν_π(w'-B(a,b)w)\bigr),
\]
which proofs the statement.
\end{proof}

A section $ζ∈\sec{π^*π}$ is of the form $ζ(a)=(a,\ul{ζ}(a))$ where $\map{\ul{ζ}}{E}{E}$ is the corresponding section along $π$. From the result above we have that the covariant derivative of the section $ζ$ in the direction of a vector $w∈T_aE$ is
\[
D_wζ=\bar{κ}(T_aζ(w))=\bar{κ}\bigl(w,T_a\ul{ζ}(w)\bigr)=\bigl(a,ν_π\bigl(T_a\ul{ζ}(w)-B(a,\ul{ζ}(a))w\bigr)\bigr),
\]
where $m=π(a)$. 

From this expression it is clear that the formula for the covariant derivative is simpler when expressed in terms of sections along $π$. Therefore we will consider the covariant derivative as a map $\map{D}{\vectorfields{E}\times\sec[π]{π}}{\sec[π]{π}}$ which is given by
\[
D_wσ=ν_π\bigl(T_aσ(w)-B\bigl(a,σ(a)\bigr)w\bigr),\qquad σ∈\sec[π]{π},\ w∈T_aE.
\]

As a consequence of the above formula we have the following result.

\begin{theorem}
\label{covariant.derivative}
The covariant derivative associated to the linearized connection $\overline{\Hor}$ is given by 
\[
D_Wσ=κ\bigl([P_\H(W),σ^\V]+Tσ(P_\V(W))\bigr)
\]
for every section $σ$ of $E$ along $π$ and every vector field $W∈\vectorfields{E}$.
\end{theorem}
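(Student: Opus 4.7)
My plan is to start from the pointwise formula
\[
D_w\sigma = \nu_\pi\bigl(T_a\sigma(w) - B(a,\sigma(a))w\bigr)
\]
derived immediately before the theorem statement, and match it with $\kappa([P_\H W, \sigma^\V] + T\sigma(P_\V W))$. The first move is to show that both sides are $C^\infty(E)$-linear in $W$, which reduces the verification to separate horizontal and vertical cases. The LHS is tensorial in $W$ by construction. For the RHS, the Leibniz rule for the Lie bracket gives $[fP_\H W,\sigma^\V] = f[P_\H W,\sigma^\V] - (\sigma^\V f)P_\H W$, and the non-tensorial correction is horizontal, hence killed by the connector since $\kappa\circ P_\H = 0$. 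The term $T\sigma(fP_\V W) = fT\sigma(P_\V W)$ is manifestly pointwise linear in $W$.

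The vertical case is immediate. If $W$ is $\pi$-vertical then $P_\H W = 0$, so the bracket term drops out and the RHS reduces to $\kappa(T\sigma(W))$. Since $\pi\circ\sigma = \pi$ forces $T\pi\circ T\sigma = T\pi$, the image $T\sigma(W)$ is again $\pi$-vertical, and on vertical vectors $\kappa$ coincides with $\nu_\pi$. On the LHS, the semibasic property of the linearized connection noted right after Theorem~\ref{linear.connection.0} gives $B(a,\sigma(a))W = 0$, so the LHS also equals $\nu_\pi(T\sigma(W))$.

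The horizontal case carries the real content. For this I would work in a local trivialization with coordinates $(x^i,y^A)$ and the horizontal basis $H_i = \partial/\partial x^i - \Gamma^A_i\,\partial/\partial y^A$. Writing $W = W^i H_i$ and $\sigma^\V = \sigma^A\,\partial/\partial y^A$, the identity $[H_i,\partial/\partial y^A] = \Gamma^B_{iA}\,\partial/\partial y^B$ splits the bracket into three pieces: a term $W^i\sigma^B\Gamma^A_{iB}\,\partial/\partial y^A$, the directional derivative $W^i H_i(\sigma^A)\,\partial/\partial y^A$, and a horizontal remainder $-\sigma^A(\partial W^i/\partial y^A)\,H_i$ coming from the Leibniz rule. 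The horizontal remainder is annihilated by $\kappa$, while expanding $H_i(\sigma^A) = \partial\sigma^A/\partial x^i - \Gamma^B_i\,\partial\sigma^A/\partial y^B$ and comparing with the coordinate formula for $B$ recalled in Section~\ref{section:linearization} shows that the surviving pieces reproduce exactly the coordinate expression of $\nu_\pi(T\sigma(W) - B(a,\sigma(a))W)$. The main obstacle is just this coordinate bookkeeping; an intrinsic proof is possible using the flow-theoretic description of the linearized parallel transport as the fibre derivative of the original one, but that is developed in Section~\ref{section:fibre.derivative}, so a coordinate verification is the most economical way to close the proof within the present section.
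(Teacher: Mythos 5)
Your argument is correct, and its skeleton---reduce by $\cinfty{E}$-tensoriality in $W$, then treat the vertical and horizontal cases separately---is exactly the paper's; indeed your vertical case is identical, and your explicit Leibniz verification of tensoriality (the non-tensorial term $-(\sigma^\V f)\,P_\H(W)$ being horizontal, hence killed by $\kappa$) is spelled out more fully than in the paper, which disposes of that point in one line. Where you genuinely diverge is the horizontal case. The paper proves $(D_{X^\H}\sigma)(a)=\nu_\pi\bigl([X^\H,\sigma^\V](a)\bigr)$ for basic $X$ intrinsically: it rewrites $B(a,\sigma(a))X^\H(a)$ via $\nu_{T\pi}=T\nu_\pi\circ\caninv{TE}$ (Proposition~\ref{involution.vertical}~(2)), then invokes the bracket identity $\vlift_{\tau_E}(\sigma^\V,[X^\H,\sigma^\V])=T\sigma^\V\circ X^\H-\caninv{TE}\circ TX^\H\circ\sigma^\V$ together with Proposition~\ref{several.relations}~(3). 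You instead verify the same identity in a chart from $[H_i,\partial/\partial y^A]=\Gamma^B_{iA}\,\partial/\partial y^B$ and the coordinate formula for $B$, and your bookkeeping does close: both sides reduce to the components $W^i\bigl(H_i(\sigma^A)+\Gamma^A_{iB}\sigma^B\bigr)$, with the horizontal remainders annihilated by $\kappa$. What the coordinate route buys is economy and concreteness; what it loses is generality. The paper's standing convention is the category of smooth Banach bundles, and the coordinate expressions you compare against (including the local formula for $B$) are derived in the paper under the explicit hypothesis that $E$ is finite dimensional, so as written your proof establishes the theorem only in that case. The defect is repairable rather than fatal: run the same computation in a local trivialization $E|_U\simeq U\times F$ over a chart, with $\Gamma(x,y)$ a smooth family of continuous linear maps and index sums replaced by partial Fr\'echet derivatives, every step being just the chain and Leibniz rules; but to match the theorem's stated generality you should either do this explicitly or adopt the paper's involution-based intrinsic computation for the horizontal step.
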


\begin{proof}
We recall that on any manifold $N$ the bracket $[X,Y]$ of two vector fields $X,Y∈\vectorfields{N}$ can be written in the form (see~\cite[\S \textbf{6.14}]{NatOp}) 
\[
ν_{τ_N}∘(TY∘X-\caninv{TN}∘TX∘Y)=[X,Y],
\]
or equivalently
\[
\vlift_{τ_N}(Y,[X,Y])=TY∘X-\caninv{TN}∘TX∘Y.
\]

\smallskip

In our concrete case the manifold $N$ is $N=E$. For a basic vector field $X∈\vectorfields{M}$ and a section $σ$ along $π$ we have 
\[
\tag{$*$}
\vlift_{τ_E}(σ^\V,[X^\H,σ^\V])=Tσ^\V○X^\H-\caninv{TE}○TX^\H○σ^\V.
\]
Taking into account that $ν_π○σ^\V=σ$ and $\hlift(a+sb,X(m))=X^\H(a+sb)$, and using $ν_{Tπ}=Tν_π○\caninv{TE}$ (Proposition~\ref{involution.vertical} \thetag{2}), we have
\begin{align*}
Tσ(X^\H(a))&-B(a,σ(a))X^\H(a)=\\
&=Tσ(X^\H(a))-Tν_π○\caninv{TE}\Bigl(\frac{d}{ds}X^\H(a+sσ(a))\at{s=0}\Bigr)\\
&=T(ν_π○σ^\V)(X^\H(a))-Tν_π○\caninv{TE}○TX^\H(σ^\V(a))\\
&=Tν_π\Bigl(Tσ^\V(X^\H(a))-\caninv{TE}○TX^\H(σ^\V(a))\Bigr)\\
&=Tν_π○\vlift_{τ_E}\bigl(σ^\V(a),[X^\H,σ^\V](a)\bigr)\\
&=\vlift_π\bigl(σ(a),ν_π([X^\H,σ^\V](a))\bigr),
\end{align*}
where in the last step we have used Proposition~\ref{several.relations} \thetag{3}, and we have taken into account that $[X^\H,σ^\V]$ is vertical because $X$ is basic. Therefore
\[
(D_{X^\H}σ)(a)=ν_π\bigl([X^\H,σ^\V](a)\bigr).
\]

For a vertical vector $w∈\ver[a]{π}$ we have $B(a,b)w=0$, because $T_aπ(w)=0$. Hence 
\[
D_vσ=ν_π\bigl(Tσ(v)-B(a,σ(a))v\bigr)=ν_π\bigl(Tσ(v)\bigr).
\]

Up to now we have proved that
\[
D_{X^\H+η^\V}\,σ=ν_π\bigl([X^\H,σ^\V]+Tσ(η^\V)\bigr),
\]
for $η∈\sec{π^*E}$ and $X∈\vectorfields{M}$. In other words
\[
D_Yσ=ν_π\bigl([P_\H(Y),σ^\V]+Tσ(P_\V(Y))\bigr),
\]
for any projectable vector field $Y∈\vectorfields{E}$.

For a general vector field $W∈\vectorfields{E}$ the bracket $[P_\H(W),σ^\V]$ is no longer vertical, and we cannot apply $ν_π$ directly. Projecting first to the vertical bundle and then applying $ν_π$ we have that the expression
\[
ν_π\bigl(P_\V([P_\H(W),σ^\V])\bigr)=κ([P_\H(W),σ^\V])
\]
is $\cinfty{TE}$-linear in $W$, from where the result follows.

Alternatively, one can check explicitly (as we did in~\cite{linearization.1}) that the expression on the right hand side of the formula in the statement satisfies the properties of a linear connection on $π^*E$. As every vector in $TE$ can be obtained as the value of a projectable vector field the result follows.
\end{proof}

Notice that a section $σ$ along $π$ is basic if and only if $D_wσ=0$ for all vertical vector~$w$. 

The expression for the covariant derivative in Theorem~\ref{covariant.derivative} was the starting point in~\cite{linearization.1}. In such paper the reader can find further information about several applications of the theory.

\section{Fibre derivative, curvature, and parallel transport}
\label{section:fibre.derivative}

It is clear from the formulas above that procedure of linearization of a connection has to do with the fiber derivative. In this section we will make precise this relationship.

Given a (generally non linear) bundle map $\map{ϕ}{E}{F}$ between two vector bundles $\map{π}{E}{M}$ and $\map{τ}{F}{N}$ fibered over a map $\map{φ}{M}{N}$ the fibre derivative of~$ϕ$ is the vector bundle map $\map{\calfϕ}{π^*E}{τ^*F}$ fibered over $ϕ$ determined by 
\[
\vlift_τ∘\calfϕ=Tϕ∘\vlift_π.
\]
In more explicit terms, for $(a,b)∈π^*E$ the fibre derivative of $ϕ$ is of the form $\calfϕ(a,b)=(ϕ(a),\calf_aϕ(b))$ where $\calf_aϕ$ is the restriction of $\calf ϕ$ to the fibre over $a$ and is given by $\calf_aϕ(b)=ν_τ∘Tϕ\bigl(\vlift_π(a,b)\bigr)$.

For the identity map in $E$ we have $\calf\id_E=\id_{π^*E}$. If $\map{ρ}{G}{P}$ is a third vector bundle and $\map{ψ}{F}{G}$ is another fibered map then $\calf{(ψ∘ϕ)}=\calfψ∘\calfϕ$. These properties follow easily from the definition.

\medskip

We next show that the fibre derivative of a projectable vector field on a vector bundle defines a linear vector field on the pullback bundle. We recall~\cite{Mackenzie} that a vector field $Z∈\vectorfields{F}$ on a vector bundle $\map{τ}{F}{N}$ is said to be linear if it satisfies one of the following two equivalent properties 
\begin{enumerate}
\item The flow of $Z$ is by vector bundle automorphisms.
\item $Z$ is a vector bundle morphism from $\map{τ}{F}{N}$ to $\map{Tτ}{TF}{TN}$.
\end{enumerate}
There is a one to one correspondence between linear vector fields on $\map{τ}{F}{N}$ and derivations of the $\cinfty{M}$-module $\sec{F}$. If $\{ϕ_s\}$ is the local flow of a linear vector field $Z$ then the derivation $\cald$ associated to $Z$ satisfies
\[
\cald η(p)=\frac{d}{ds}\bigl(ϕ_s^*η\bigr)(p)\at{s=0}=\lim_{s→0}\frac{ϕ_{-s}\bigl(η(φ_s(p))-η(p)\bigr)}{s}
\]
where $\{φ_s\}$ is the flow on the base (the local flow of the vector field in $N$ to which $Z$ projects). See~\cite{Mackenzie} for the details. 

\medskip

We recall that if $U∈\vectorfields{P}$ is a vector field on a manifold $P$ with local flow $\{φ_s\}$ then the infinitesimal generator of the flow $\{Tφ_s\}$ is the complete lift $U^\C∈\vectorfields{TP}$ of $Z$. It is related to the tangent of $U$ by the canonical involution $\caninv{TP}∘TU=U^\C$.

\begin{proposition}
\label{linearization.of.a.vector.field}
Let $Y∈\vectorfields{E}$ be a projectable vector field. 
\begin{enumerate}
\item The fibre derivative $\calf Y$ is a linear vector field on $π^*E$. 
\item If $\{ϕ_s\}$ is the local flow of $Y$ then $\{\calf ϕ_s\}$ is the local flow of $\calf Y$. 
\item The derivation $\map{\cald^Y}{\sec[π]{E}}{\sec[π]{E}}$ associated to the linear vector field $\calf Y$ is $\cald^Yσ=ν_π\bigl([Y,σ^\V]\bigr)$.
\end{enumerate}
If $Y'∈\vectorfields{E}$ is another projectable vector field then
\begin{enumerate}
\setcounter{enumi}{3}
\item $[\calf Y,\calf Y']=\calf [Y,Y']$, and
\item $[\cald^Y,\cald^{Y'}]=\cald^{[Y,Y']}$.
\end{enumerate}
\end{proposition}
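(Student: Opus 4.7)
The plan is to prove (2) first, deduce (1) immediately, then derive the bracket formula (3) from the flow description, and obtain (4) and (5) as algebraic consequences of (3). A preliminary check is that $\calf Y$ actually lands in $T(π^*E)$ under the identification $T(π^*E) \simeq (Tπ)^*(TE)$: the first component $Y(a)$ is rooted at $a$, and the second component $\calf_a Y(b) = ν_{Tπ}(TY(\vlift_π(a,b)))$ is rooted at $b$ by Proposition~\ref{several.relations}\thetag{1} together with $τ_E \circ Y = \id_E$.

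For (2), functoriality of $\calf$ gives $\calf φ_0 = \id_{π^*E}$ and $\calf φ_{s+t} = \calf φ_s \circ \calf φ_t$, so $\{\calf φ_s\}$ is a local one-parameter group on $π^*E$. Differentiating $\calf φ_s(a,b) = (φ_s(a),\,ν_π(Tφ_s(\vlift_π(a,b))))$ at $s=0$ and using $\frac{d}{ds}|_{s=0} Tφ_s = Y^\C = \caninv{TE}\circ TY$ together with Proposition~\ref{involution.vertical}\thetag{2}, the infinitesimal generator turns out to be $\calf Y$. Claim (1) then follows immediately: each $\calf φ_s$ acts linearly on the fibres of $π^*π$ via $\calf_a φ_s$, so it is a vector bundle automorphism over $φ_s$, and the flow of $\calf Y$ thus consists of vector bundle automorphisms.

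For (3), the flow formula for the derivation associated to a linear vector field yields
\[
\cald^Y σ(a) = \tfrac{d}{ds}\big|_{s=0}\calf_{φ_s(a)}φ_{-s}\bigl(σ(φ_s(a))\bigr) = \tfrac{d}{ds}\big|_{s=0}ν_π\bigl(Tφ_{-s}(σ^\V(φ_s(a)))\bigr),
\]
using $σ^\V(e)=\vlift_π(e,σ(e))$. Since $ν_π$ is linear on vertical fibres it commutes with the $s$-derivative, and what remains is precisely the standard Lie-derivative formula for $ν_π([Y,σ^\V](a))$.

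Finally, since $Y'$ is projectable and $σ^\V$ is vertical, $[Y',σ^\V]$ is a vertical vector field on $E$, and any vertical $V$ satisfies $(ν_π\circ V)^\V = V$; hence $[Y',σ^\V] = (\cald^{Y'}σ)^\V$. Iterating (3) yields $\cald^Y\cald^{Y'}σ = ν_π([Y,[Y',σ^\V]])$, and the Jacobi identity gives $[\cald^Y,\cald^{Y'}]σ = ν_π([[Y,Y'],σ^\V]) = \cald^{[Y,Y']}σ$, proving (5). Claim (4) then follows from the Lie-algebra isomorphism between linear vector fields on $π^*E$ and derivations of $\sec[π]{E}$. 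The only delicate step is the exchange of $ν_π$ with the $s$-derivative in (3), which is justified by linearity of $ν_π$ on each vertical fibre and which is what permits translating the flow formula on $π^*E$ into a bracket formula on $E$.
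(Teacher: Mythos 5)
Your argument is correct, but it reorganizes the proof and, for items (4)--(5), takes a genuinely different route from the paper. For (1)--(2) the ingredients coincide ($Y^\C=\caninv{TE}∘TY$, Proposition~\ref{involution.vertical}, and the rooting check via Proposition~\ref{several.relations}), only the order is inverted: the paper proves (1) directly, verifying that $\calf Y$ is a section of $τ_{π^*E}$ and invoking that fibre derivatives are vector bundle maps, and then obtains (2) by showing $\calf Y$ is $\vlift_π$-related to the complete lift $Y^\C$, whereas you get the local one-parameter group property for free from functoriality of $\calf$ (using that the flow of a projectable field is by fibered maps), identify the generator by differentiating the explicit formula, and deduce linearity of $\calf Y$ from the flow-by-bundle-automorphisms characterization that the paper itself recalls — a legitimate shortcut. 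For (3) the two computations are the same, written with mutually inverse maps: you push $ν_π$ through the $s$-derivative, the paper pushes $\vlift_π$ the other way to land on the Lie-derivative limit; your explicit justification (the curve $s\mapsto Tϕ_{-s}(σ^\V(ϕ_s(a)))$ stays in the fixed fibre $\ver[a]{π}$, where $ν_π$ is linear) is exactly the point the paper's formulation encodes, and you correctly note that $[Y,σ^\V]$ is vertical because $Y$ is projectable. The real divergence is (4)--(5): the paper proves (4) at the level of flows, applying $\calf$ to the commutator $c(s)=ϕ_s∘ϕ'_s∘ϕ_{-s}∘ϕ'_{-s}$ and taking second derivatives at $s=0$, then declares (5) equivalent; you instead prove (5) purely algebraically from (3), via the identity $[Y',σ^\V]=(\cald^{Y'}σ)^\V$ (valid precisely because $[Y',σ^\V]$ is vertical) and the Jacobi identity, and recover (4) from the correspondence between linear vector fields and derivations. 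Your route avoids the flow-commutator computation entirely; its only cost is that it needs this correspondence to be an injective Lie algebra homomorphism, slightly more than the bare bijection the paper explicitly recalls — this is standard and is in Mackenzie, but if you want the step self-contained you should verify directly that the derivation associated to $[\calf Y,\calf Y']$ is $[\cald^{Y},\cald^{Y'}]$, e.g.\ from the pullback-by-flows formula for $\cald$.
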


\begin{proof}
From the definition of the fibre derivative we have that $\calf Y$ is a map  $\map{\calf Y}{π^*E}{(Tπ)^*(TE)}$ projecting onto $Y$, i.e. $\pr_1∘\calf Y=Y∘(π^*π)$. Moreover,\leavevmode
\begin{align*}
(τ_E,τ_E)(\calf Y(a,b))
&=\bigl(τ_E(Y(a)),τ_E\bigl(\calf_aY(b)\bigl)\bigr)\\
&=\bigl(a,τ_E∘ν_{Tπ}∘TY(\vlift_π(a,b))\bigr)\\
&=\bigl(a,ν_π∘Tτ_E∘TY(\vlift_π(a,b))\bigr)\\
&=\bigl(a,ν_π(\vlift_π(a,b))\bigr)\\
&=(a,b),
\end{align*}
where we have used Proposition~\ref{several.relations} \thetag{1}.

\begin{wrapfigure}{l}{0.36\textwidth}
$\xymatrix{
&π^*E\ar[d]_{π^*π}\ar[r]^{\calf Y}&T(π^*E)\ar[d]^{T(π^*π)}\\
&E\ar[d]_{π}\ar[r]^{Y}&TE\ar[d]^{Tπ}\\
&M\ar[r]_{X}&TM
}\qquad$
\end{wrapfigure}

Under the canonical isomorphism  $T(π^*E)\simeq (Tπ)^*(TE)$ we have that $(τ_E,τ_E)$ corresponds to $τ_{π^*E}$, so that we have got a map $\map{\calf Y}{π^*E}{T(π^*E)}$ such that  $τ_{π^*E}∘\calf Y=\id_{π^*E}$. Moreover, the fibre derivative of a map is a vector bundle map, and thus we conclude that $\calf Y$ is a linear vector field on $π^*E$. This proves \thetag{1}.

We now prove \thetag{2}. From the definition of the fibre derivative we have $\vlift_{Tπ}∘\calf Y=TY∘\vlift_π$. Applying the canonical involution to both sides we get 
\[
T\vlift_π∘\calf Y=Y^\C∘\vlift_π,
\]
where we have used $\caninv{TE}∘\vlift_{Tπ}=T\vlift_π$ (Proposition~\ref{involution.vertical} \thetag{1}) and $\caninv{TE}∘TY=Y^\C$. Thus $\calf Y$ is $\vlift_π$-related to $Y^\C$ from where it follows that the local flow  $\{Φ_s\}$ of $\calf Y$ satisfies $\vlift_π∘Φ_s=Tϕ_s∘\vlift_π$. This is the defining equation for the fibre derivative of $ϕ_s$, so that we conclude $Φ_s=\calf ϕ_s$.

For identifying the derivation associated to $\calf Y$, for $a∈E$ we have
\begin{align*}
\vlift_π\bigl((\calfϕ_s^*\,σ)(a)\bigr)
&=\vlift_π\bigl(\calfϕ_{-s}(σ(ϕ_s(a)))\bigr)\\
&=Tϕ_{-s}\bigl(σ^\V(ϕ_s(a))\bigr).
\end{align*}
Computing $\vlift_π(\cald^Yσ(a))$ by means of the limit
\begin{align*}
\vlift_π(\cald^Yσ(a))
&=\vlift_π\left(\lim_{s→0}\frac{1}{s}[(\calfϕ_s^*\,σ)(a)-σ(a)]\right)\\
&=\lim_{s→0}\frac{1}{s}[Tϕ_{-s}\bigl(σ^\V(ϕ_s(a))\bigr)-σ^\V(a)]\\
&=\call_Yσ^\V(a),
\end{align*}
from where $\cald^Yσ(a)=ν_π\bigl(\call_Yσ^\V(a)\bigr)$. This proves \thetag{3}.

\thetag{4} and \thetag{5} are clearly equivalent, so that we will just prove \thetag{4}. If $\{ϕ_s\}$ is the flow of $Y$ and $\{ϕ'_s\}$ is the local flow of $Y'$ then the commutator
$c(s)=ϕ_s∘ϕ'_s∘ϕ_{-s}∘ϕ'_{-s}$ satisfies $c(0)=\id_E$, $c'(0)=0$ and $2·c''(0)=[Y,Y']$. Applying the fibre derivative we get $\calf(ϕ_s∘ϕ'_s∘ϕ_{-s}∘ϕ'_{-s})=\calf ϕ_s∘\calf ϕ'_s∘\calf ϕ_{-s}∘\calf ϕ'_{-s}$. Taking the second derivative with respect to $s$ at $s=0$ on the left hand side we get $\calf[Y,Y']$ and on the right hand side we get $[\calf Y,\calf Y']$.  
\end{proof}

For a projectable vector field $Y$ the derivations $\cald^Y$ and $D_Y$ are closely related but they are not equal.

\begin{proposition}
\label{derivation.vs.covariant.derivative}
If $Y∈\vectorfields{E}$ is a projectable vector field then
\[
\cald^Yσ=D_Yσ-D_{σ^\V}(κ(Y)),
\]
for every section $σ$ along $π$.
\end{proposition}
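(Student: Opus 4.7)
The plan is to decompose $Y$ into horizontal and vertical parts, expand $\cald^Y\sigma = \nu_\pi([Y,\sigma^\V])$ via Proposition~\ref{linearization.of.a.vector.field}\thetag{3}, and match each piece against the formula for $D_Y\sigma$ provided by Theorem~\ref{covariant.derivative}. Since $Y$ is projectable we may write $Y = P_\H(Y) + P_\V(Y)$, where $P_\H(Y) = X^\H$ for some $X\in\vectorfields{M}$ and $P_\V(Y) = \kappa(Y)^\V$ with $\kappa(Y)\in\sec[\pi]{E}$ the section along $\pi$ defined by $a\mapsto \kappa(Y(a))$. Bilinearity of the Lie bracket then yields
\[
\cald^Y\sigma = \nu_\pi\bigl([X^\H,\sigma^\V]\bigr) + \nu_\pi\bigl([\kappa(Y)^\V,\sigma^\V]\bigr).
\]

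For the first summand, $[X^\H,\sigma^\V]$ is vertical because $X^\H$ is projectable and $\sigma^\V$ is vertical; on vertical vectors $\nu_\pi$ coincides with $\kappa$, so Theorem~\ref{covariant.derivative} together with $P_\V(X^\H)=0$ gives $\nu_\pi([X^\H,\sigma^\V]) = \kappa([X^\H,\sigma^\V]) = D_{X^\H}\sigma$. The second summand requires the auxiliary identity
\[
\nu_\pi\bigl([\eta^\V,\sigma^\V]\bigr) = D_{\eta^\V}\sigma - D_{\sigma^\V}\eta, \qquad \eta,\sigma\in\sec[\pi]{E}.
\]
Since $B(a,c)v = 0$ for every vertical $v$, the formula from Section~\ref{section:covariant.derivative} reduces to $D_v\sigma = \nu_\pi(T\sigma(v))$ whenever $v$ is vertical, and the identity is then most transparent in local fibre coordinates, where $\eta^\V = \eta^A(x,y)\,\partial/\partial y^A$ and $\sigma^\V = \sigma^A(x,y)\,\partial/\partial y^A$, so that the bracket
\[
[\eta^\V,\sigma^\V] = \Bigl(\eta^A\frac{\partial\sigma^B}{\partial y^A} - \sigma^A\frac{\partial\eta^B}{\partial y^A}\Bigr)\frac{\partial}{\partial y^B}
\]
matches the right hand side after applying $\nu_\pi$.

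Applying the identity with $\eta = \kappa(Y)$ and combining with the previous summand, together with $\mathbb{R}$-linearity of $D$ in its first argument (so $D_{X^\H}\sigma + D_{\kappa(Y)^\V}\sigma = D_Y\sigma$), one concludes
\[
\cald^Y\sigma = D_Y\sigma - D_{\sigma^\V}\kappa(Y),
\]
as desired. The auxiliary identity on the bracket of two vertical lifts is the main obstacle; the remainder is essentially bookkeeping of the horizontal/vertical decomposition.
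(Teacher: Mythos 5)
Your proof is correct and takes essentially the same route as the paper's: both decompose $Y=X^\H+\eta^\V$ with $\eta=\kappa(Y)$ and reduce the statement to the identity $\nu_\pi([\eta^\V,\sigma^\V])=D_{\eta^\V}\sigma-D_{\sigma^\V}\eta$, which the paper asserts as immediate and you verify in fibre coordinates. The only cosmetic caveat is that your index computation presumes $E$ finite dimensional, whereas the paper works with Banach bundles; the same identity follows in a Banach chart from the local bracket formula $[U,V]=DV\cdot U-DU\cdot V$, so the argument is unaffected.
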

\begin{proof}
If $Y∈\vectorfields{E}$ is projectable over $X∈\vectorfields{M}$ we can write $Y=X^\H+η^\V$ where $η=κ(Y)∈\sec[π]{E}$. Then 
\[
\cald^Yσ=ν_π\bigl([X^\H,σ^\V]+[η^\V,σ^\V]\bigr),
\]
while
\[
D_Yσ=ν_π\bigl([X^\H,σ^\V]+Tσ(η^\V)\bigr).
\]
Notice that $D_{η^\V}σ=ν_π(Tσ(η^\V))$, from where it is easy to see that $ν_π([η^\V,σ^\V])=D_{η^\V}σ-D_{σ^\V}η$. Therefore the difference between both derivations is
\[
D_Yσ-\cald^Yσ=ν_π(Tσ(η^\V))-ν_π\bigl([η^\V,σ^\V]\bigr)=ν_π\bigl(Tη(σ^\V)\bigr)=D_{σ^\V}η,
\]
which proves the statement.
\end{proof}

It is apparent that the following class of vector fields will be very important in the description and analysis of the linearized connection.

\begin{definition}
A vector field $Y∈\vectorfields{E}$ is said to be $\Hor$-basic if it is projectable and $κ(Y)$ is a basic section.
\end{definition}

In other words, a vector field $Y∈\vectorfields{E}$ is $\Hor$-basic if it can be written as $Y=X^\H+η^\V$ with $X∈\vectorfields{M}$ (a basic vector field) and $η∈\sec{E}$ (a basic section). The set of $\Hor$-basic vector fields is a $\cinfty{M}$-module. From Proposition~\ref{derivation.vs.covariant.derivative} we have that a vector field $Y$ is $\Hor$-basic if and only if $\cald^Y=D_Y$.

\smallskip

For a $\Hor$-basic vector field $Y$ on $E$ the fibre derivative $\calf Y$ of $Y$ equals the horizontal lift of $Y$ with respect to the linearized connection.
    
\begin{theorem}
\label{horizontal=fibre.derivative}
The horizontal lift of a $\Hor$-basic vector field $Y$ with respect to the linearized connection $\ol{\Hor}$ is equal to the fibre derivative of $Y$, that is $Y^{\bar{\H}}=\calf Y$. 
\end{theorem}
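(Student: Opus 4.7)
My plan is to reduce the equality $Y^{\bar{\H}} = \calf Y$ to an equality of second components in the identification $T(\pi^*E) \simeq (T\pi)^*(TE)$, and then exploit the decomposition afforded by $\Hor$-basicity. By Theorem~\ref{linear.connection.0}, $Y^{\bar{\H}}(a,b) = (Y(a), B(a,b)Y(a))$; meanwhile, applying $\nu_{T\pi}$ to the defining equation $\vlift_{T\pi}\circ\calf Y = TY\circ\vlift_\pi$ yields $\calf Y(a,b) = (Y(a), \nu_{T\pi}(TY(\vlift_\pi(a,b))))$, with $TY(\vlift_\pi(a,b)) = \tfrac{d}{ds}Y(a+sb)|_{s=0}$. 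The first components agree automatically, so the task reduces to showing
\[
B(a,b)Y(a) = \nu_{T\pi}\Bigl(\tfrac{d}{ds}Y(a+sb)\big|_{s=0}\Bigr)
\qquad \text{in } T_bE.
\]

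For this I would write $Y = X^\H + \eta^\V$ with $X\in\vectorfields{M}$ and $\eta = \kappa(Y)\in\sec{E}$ basic. Using $\pi(a+sb)=\pi(a)$, the curve $s\mapsto Y(a+sb)$ splits as $U(s)+V(s)$ in $T_{a+sb}E$ with $U(s) = \hlift(a+sb, X(\pi(a)))$ and $V(s) = \vlift_\pi(a+sb, \eta(\pi(a)))$. On the left-hand side, linearity of $B(a,b)$ combined with the semibasic property of $\ol{\Hor}$ (noted just after Theorem~\ref{linear.connection.0}) kills the vertical summand, leaving $B(a,b)Y(a) = B(a,b)\hlift(a, X(\pi(a))) = \nu_{T\pi}(U'(0))$, the last equality being the very definition of $B$. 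On the right, functoriality of the tangent functor applied to the fibrewise addition $+\colon TE\times_E TE \to TE$ identifies $\tfrac{d}{ds}(U+V)(s)|_{s=0}$ with $U'(0)\Tplus V'(0)$ in the $T\tau_E$-structure on $TTE$ (both summands share $T\tau_E$-image $\vlift_\pi(a,b)$). Since $T\pi\circ U$ and $T\pi\circ V$ are constant, both derivatives lie in $\Ver(T\pi)$, and the linearity property $\nu_{T\pi}(w_1\Tplus w_2) = \nu_{T\pi}(w_1)+\nu_{T\pi}(w_2)$ from Section~\ref{section:preliminaries} then gives $\nu_{T\pi}(\tfrac{d}{ds}Y(a+sb)|_{s=0}) = \nu_{T\pi}(U'(0)) + \nu_{T\pi}(V'(0))$.

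What remains is to verify that the vertical contribution vanishes, $\nu_{T\pi}(V'(0)) = 0_b$. Unwinding the definitions of $\Tplus$ and $\Ttimes$ on $TE$ will produce the identity $\vlift_\pi(a,\eta_0)\Tplus t\Ttimes 0_b = \vlift_\pi(a+tb,\eta_0)$ with $\eta_0=\eta(\pi(a))$; this rewrites $V$ as $s\mapsto V(0)\Tplus s\Ttimes 0_b$, whose derivative at $s=0$ is $\vlift_{T\pi}(V(0), 0_b)$ by the very definition of $\vlift_{T\pi}$. Since $\nu_{T\pi}\circ\vlift_{T\pi}=\pr_2$, this delivers $\nu_{T\pi}(V'(0))=0_b$. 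I expect this last step, a careful coordinate-free juggling of the two vector bundle structures on $TE$ and their lifts to $TTE$, to be the main technical point. Assembling the pieces gives $\nu_{T\pi}(\tfrac{d}{ds}Y(a+sb)|_{s=0}) = \nu_{T\pi}(U'(0)) = B(a,b)Y(a)$, which is the required identity.
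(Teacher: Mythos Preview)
Your proof is correct and follows essentially the same strategy as the paper: decompose the $\Hor$-basic field as $Y=X^\H+\eta^\V$ with $X$ and $\eta$ basic, and treat the horizontal and vertical pieces separately. The horizontal computation is identical to the paper's. The two proofs differ only in how the vertical piece is handled and in how the pieces are reassembled.

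For the vertical piece, the paper computes $\calf_a\eta^\V(b)$ via the canonical involution $\caninv{TE}$ and Proposition~\ref{involution.vertical}\thetag{2} ($\nu_{T\pi}=T\nu_\pi\circ\caninv{TE}$), eventually reducing to the derivative of the constant curve $b$. You instead observe directly that $V(s)=\vlift_\pi(a+sb,\eta_0)=V(0)\Tplus s\Ttimes 0_b$ in the $T\pi$-structure on $TE$, so $V'(0)=\vlift_{T\pi}(V(0),0_b)$ and $\nu_{T\pi}(V'(0))=0_b$. This is a legitimate and arguably more elementary route, bypassing the involution entirely.

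For the reassembly, the paper exploits linearity of both $Y\mapsto Y^{\bar{\H}}$ and $Y\mapsto\calf Y$ in $Y$, so it suffices to verify the identity on $X^\H$ and on $\eta^\V$ separately and then add. You instead split at the level of the curve $s\mapsto Y(a+sb)$, which forces you to identify $\tfrac{d}{ds}(U+V)(s)|_{s=0}$ with $U'(0)\Tplus V'(0)$ in the $T\tau_E$-structure on $TTE$ and then invoke the compatibility $\nu_{T\pi}(w_1\Tplus w_2)=\nu_{T\pi}(w_1)+\nu_{T\pi}(w_2)$. This is correct, but it is an extra layer of double-vector-bundle bookkeeping that the paper's approach avoids; separating the two summands before differentiating is slightly cleaner.
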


\begin{proof}
For a horizontal projectable vector field $Y=X^\H$, with $X∈\vectorfields{M}$, we have
\begin{align*}
(\calf X^\H)(a,b)
&=\bigl(X^\H(a),\calf_aX^\H(b)\bigr)\\
&=\bigl(X^\H(a),ν_{Tπ}(TX^\H(\vlift_π(a,b)))\bigr)\\
&=\left(X^\H(a),ν_{Tπ}\Bigl(\frac{d}{ds}X^\H(a+sb)\at{s=0}\Bigr)\right)\\
&=\left(X^\H(a),ν_{Tπ}\Bigl(\frac{d}{ds}\hlift(a+sb,X(m))\at{s=0}\Bigr)\right)\\
&=\bigl(X^\H(a),B(a,b)X^\H(a)\bigr)\\
&=ξ^{\bar{\H}}\bigl((a,b),X^\H(a)\bigr).
\end{align*}

For a vertical vector field $Y=η^\V$, with $η∈\sec{E}$ a basic section, using that $ν_{Tπ}=Tν_π∘\caninv{TE}$ on $Tπ$-verticals (Proposition~\ref{involution.vertical} \thetag{2}), we have 
\begin{align*}
\calf_aη^\V(b)
&=ν_{Tπ}(Tη^\V(\vlift_π(a,b)))\\
&=ν_{Tπ}\Bigl(\frac{d}{ds}η^\V(a+sb)\at{s=0}\Bigr)\\
&=ν_{Tπ}\Bigl(\frac{d}{ds}\vlift_π(a+sb,η(m))\at{s=0}\Bigr)\\
&=Tν_π∘\caninv{TE}\Bigl(\frac{d}{ds}\frac{d}{dt}\bigl(a+sb+tη(m)\bigr)\at{t=0}\at{s=0}\Bigr)\\
&=Tν_π\Bigl(\frac{d}{dt}\frac{d}{ds}\bigl(a+sb+tη(m)\bigr)\at{s=0}\at{t=0}\Bigr)\\
&=Tν_π\Bigl(\frac{d}{dt}\vlift_π\bigl(a+tη(m),b\bigr)\at{t=0}\Bigr)\\
&=\frac{d}{dt}ν_π(\vlift_π(a+tη(m),b))\at{t=0}\\
&=\frac{d}{dt}\,b\;\at{t=0}\\
&=0_b,
\end{align*}
so that 
\[
(\calf η^\V)(a,b)
=\bigl(η^\V(a),\calf_aη^\V(b)\bigr)
=(η^\V(a),0_b)
=ξ^{\bar{\H}}\bigl((a,b),η^\V(a)\bigr).
\]

For a general $\Hor$-basic vector field $Y$ we write $Y=X^\H+η^\V$ with both $X$ and $η$ basic, and hence by linearity of the horizontal lift and the fibre derivative we have $Y^{\bar{\H}}=(X^\H)^{\bar{\H}}+(η^\V)^{\bar{\H}}=\calf (X^\H)+\calf (η^\V)=\calf(X^\H+η^\V)=\calf Y$.
\end{proof}

\paragraph{Parallel transport}
As an immediate consequence of Proposition~\ref{linearization.of.a.vector.field}, if the local flow of a $\Hor$-basic vector field $Y$ is $ϕ_s$ then the local flow of $Y^{\bar{\H}}$ is $\calfϕ_s$. Thus for $\Hor$-basic vector fields we have an explicit expression of the parallel transport system defined by the linear connection $\ol{\Hor}$, which complements the description given in~\cite{linearization.1}.

\begin{theorem}
\label{parallel.transport}
Let $Y∈\vectorfields{E}$ be a $\Hor$-basic vector field with local flow $\{ϕ_s\}$. For the linearized connection $\ol{\Hor}$ the parallel transport map along a flow line $s\mapsto ϕ_s(a)$ of $Y$  is $\calfϕ_s$. 

Conversely, if $\Hor$ is a connection on $E$ and $\Hor'$ is a connection on $π^*E$ satisfying the above property then $\Hor'$ is the linearization of $\Hor$.
\end{theorem}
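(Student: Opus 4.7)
The plan is to reduce both directions of the theorem to the identification $Y^{\bar{\H}}=\calf Y$ provided by Theorem~\ref{horizontal=fibre.derivative}, combined with the flow identification $\{\calf ϕ_s\}=\{\text{flow of }\calf Y\}$ from Proposition~\ref{linearization.of.a.vector.field}\thetag{2}.

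For the direct assertion, I would first recall the general principle that for any connection on a vector bundle, the parallel transport along a curve $γ$ is obtained by restricting to the fibre the flow of the horizontal lift of the tangent vector field to $γ$. Applied to the integral curve $s\mapsto ϕ_s(a)$ of the $\Hor$-basic vector field $Y$, parallel transport with respect to $\ol{\Hor}$ is therefore the fibre restriction of the local flow of $Y^{\bar{\H}}$. By Theorem~\ref{horizontal=fibre.derivative}, $Y^{\bar{\H}}=\calf Y$, and by Proposition~\ref{linearization.of.a.vector.field}\thetag{2} the local flow of $\calf Y$ is precisely $\{\calf ϕ_s\}$. Restricting to the fibre $(π^*E)_a=\{a\}\times E_{π(a)}$ yields the stated transport map $b\mapsto \calf_aϕ_s(b)$.

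For the converse, suppose $\Hor'$ is a connection on $π^*E$ whose parallel transport along every flow line $s\mapsto ϕ_s(a)$ of every $\Hor$-basic vector field $Y$ equals $\calf ϕ_s$. Then for each $(a,b)\in π^*E$ the curve $s\mapsto \calf ϕ_s(a,b)$ is horizontal with respect to $\Hor'$, and differentiating at $s=0$ identifies its tangent vector with $(\calf Y)(a,b)$. Hence the horizontal lift of $Y(a)$ at $(a,b)$ with respect to $\Hor'$ equals $(\calf Y)(a,b)$, which by Theorem~\ref{horizontal=fibre.derivative} coincides with the horizontal lift of $Y(a)$ at $(a,b)$ with respect to $\ol{\Hor}$. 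Thus $\Hor'$ and $\ol{\Hor}$ assign the same horizontal subspace to every vector of the form $Y(a)$ with $Y$ an $\Hor$-basic vector field.

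To conclude I would argue that $\Hor$-basic vector fields span the tangent space at every point of $E$: given $w\in T_aE$, write $w=\hlift(a,Tπ(w))+\vlift_π(a,κ(w))$, extend $Tπ(w)\in T_{π(a)}M$ to a vector field $X\in\vectorfields{M}$ and $κ(w)\in E_{π(a)}$ to a section $η\in\sec{E}$; then $Y=X^\H+η^\V$ is $\Hor$-basic and satisfies $Y(a)=w$. Therefore the horizontal distributions of $\Hor'$ and $\ol{\Hor}$ agree at every point $(a,b)\in π^*E$, and the two connections coincide. The only delicate point is making precise the initial identification of parallel transport with the flow of the horizontal lift, so that the differentiation at $s=0$ in the converse legitimately extracts the horizontal lift; this is however standard for connections on (not necessarily linear) fibre bundles.
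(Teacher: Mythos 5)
Your proof is correct and follows essentially the same route as the paper: the direct part identifies parallel transport with the integral curves of $Y^{\bar{\H}}=\calf Y$ (Theorem~\ref{horizontal=fibre.derivative} plus Proposition~\ref{linearization.of.a.vector.field}), and the converse differentiates the transported curve at $s=0$ to get $\calf Y(a,b)\in\Hor'_{(a,b)}\cap\ol{\Hor}_{(a,b)}$ and concludes since $\Hor$-basic fields exhaust $T_aE$. The only difference is that you spell out the spanning step explicitly, decomposing $w=\hlift(a,T\pi(w))+\vlift_\pi(a,\kappa(w))$ and extending to a basic field and a basic section, a detail the paper merely asserts.
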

\begin{proof}
The parallel transport of a vector $(a,b)∈π^*E$ along $γ(s)=ϕ_s(a)$ is given by the horizontal curve (with respect to $\ol{\Hor}$) projecting onto the curve $γ$ with initial value $(a,b)$, which is but the integral curve of $Y^{\bar{\H}}$ starting at $(a,b)$. Since $Y^{\bar{\H}}=\calf Y$, such horizontal curve is but the integral curve of $\calf Y$ starting at $(a,b)$, which is $\calf ϕ_s(a,b)$.

Conversely, assume $\Hor'$ is a connection on $π^*E$ satisfying that the horizontal lifting of the curve $s\mapsto ϕ_s(a)$ starting at $(a,b)$ is $s\mapsto \calf ϕ_s(a,b)$ for every flow line $ϕ_s$ of every $\Hor$-basic vector field $Y$ and every $(a,b)∈π^*E$. The tangent vector to the curve $\calf ϕ_s(a,b)$ at $s=0$ is hence a horizontal vector 
\[
\frac{d}{ds}\calf ϕ_s(a,b)\at{s=0}
=\calf Y(a,b)∈\Hor'_{(a,b)}.
\]
On the other hand, from Theorem~\ref{horizontal=fibre.derivative} we know that $\calf Y(a,b)∈\ol{\Hor}_{(a,b)}$. As every tangent vector to $E$ at the point $a$ can be obtained by evaluating a $\Hor$-basic vector field at $a$ it follows that $\Hor'_{(a,b)}=\ol{\Hor}_{(a,b)}$. This being true for every $(a,b)∈π^*E$ implies that the connection $\Hor'$ and $\ol{\Hor}$ are equal.
\end{proof}

For the vertical lifting $Y=η^\V$ of a basic section $η∈\sec{E}$ we can provide an explicit formula for the flow of $\calf Y$. The flow of $η^\V$ is $ϕ_s(a)=a+sη(m)$, with $m=π(a)$. Thus
\[
Tϕ_s\bigl(\vlift_π(a,b)\bigr)=\frac{d}{dt}ϕ_s(a+tb)\at{t=0}=\frac{d}{dt}(a+sη(m)+tb)\at{t=0}=\vlift_π\bigl(a+sη(m),b\bigr),
\]
from where we get $\calfϕ_s(a,b)=(a+sη(m),b)$. 

In other words, along any straight line contained in a fibre $E_m$ connecting two points $a$ and $a'$ the parallel transport map is $(a,b)\mapsto (a',b)$. This result holds true for any vertical curve (i.e. a curve contained in a fibre).

\begin{proposition}
\label{complete.parallelism}
For the linearized connection, parallel transport along a vertical curve is the standard parallelism on the fibre (as a vector space).
\end{proposition}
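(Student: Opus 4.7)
The plan is to reduce the statement to the fact, already observed right after Theorem~\ref{linear.connection.0}, that the linearized connection $\ol{\Hor}$ is semibasic. Recall this says: for every $(a,b)\in\pi^*E$ and every vertical vector $w\in\ver[a]{\pi}$ we have $B(a,b)w=0_b$, and hence the horizontal lift at $(a,b)$ is $ξ^{\bar{\H}}((a,b),w)=(w,0_b)$.

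Now let $\map{γ}{I}{E}$ be a vertical curve, i.e.\ a smooth curve whose image is contained in a single fibre $E_m$, $m\in M$. Equivalently, $π○γ$ is constant, so $\dot{γ}(s)\in\ver[γ(s)]{π}$ for every $s\in I$. Fix $b\in E_m$ and set $a=γ(0)$. I would lift $γ$ to the curve $\map{\bar{γ}}{I}{π^*E}$ defined by $\bar{γ}(s)=(γ(s),b)$. Its tangent vector at $s$, under the identification $T(π^*E)\simeq(Tπ)^*(TE)$, is $(\dot{γ}(s),0_b)$, which is precisely the horizontal lift of $\dot{γ}(s)$ at the point $(γ(s),b)$ by the semibasic property. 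Hence $\bar{γ}$ is a horizontal curve over $γ$ with initial value $(a,b)$, so it is the horizontal lift of $γ$ from $(a,b)$, and the parallel transport along $γ$ from $s=0$ to $s$ sends $(a,b)$ to $(γ(s),b)$.

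Under the canonical identification of the fibre of $π^*π$ over any point $c\in E_m$ with $E_m$ (via $(c,b)\leftrightarrow b$), this parallel transport map is the identity of $E_m$, which is by definition the standard parallelism of the vector space $E_m$. I expect no serious obstacle here: the only subtlety is to verify that the tangent vector $(\dot{γ}(s),0_b)$ really lies in $T_{(γ(s),b)}(π^*E)$ and is horizontal, but both follow directly from $\dot{γ}(s)\in\ver{π}$ together with the semibasic characterization of $\ol{\Hor}$, and no further calculation is required.
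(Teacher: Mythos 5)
Your proof is correct and takes essentially the same route as the paper: lift the vertical curve $γ$ to $(γ(s),b)$, observe via the semibasic property that its tangent vector $(\dot{γ}(s),0_b)$ is horizontal, and conclude that parallel transport is $(a,b)\mapsto(γ(s),b)$. The only cosmetic difference is that the paper states the argument for an arbitrary semibasic connection on $π^*E$, a generality your proof also delivers since semibasicness is the only property of $\ol{\Hor}$ you use.
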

\begin{proof}
In fact we will prove that the result holds true for any semibasic connection $\Hor'$ in $π^*E$. Let $\map{γ}{[t_0,t_1]}{E}$ be a vertical curve connecting the points $a=γ(t_0)$ and $a'=γ(t_1)$. A vertical curve $γ(t)$ is entirely contained in a fibre, $γ(t)∈E_{π(a)}$. The horizontal lift $γ^{\H'}$ of the curve $γ$ with initial value $(a,b)$ is $γ^{\H'}(t)=(γ(t),b)$. Indeed, the curve $γ^{\H'}$ is horizontal  
\[
\frac{dγ^{\H'}}{dt}(t)=(\dot{γ}(t),0_b)=ξ^{\H'}\bigl((γ(t),b),\dot{γ}(t)\bigr)∈\Hor'_{γ(t)},
\]
because the connection is semibasic and $\dot{γ}(t)$ is a vertical vector, and obviously $γ^{\H'}(t_0)=(a,b)$. Therefore, the parallel translation of the vector $(a,b)=γ^{\H'}(t_0)$ is the vector $γ^{\H'}(t_1)=(a',b)$.
\end{proof}

\paragraph{Curvature}
We will find the curvature of the connection $\ol{\Hor}$. We denote by $R$ the curvature 2-form of the nonlinear connection $\Hor$. It is defined by 
\[
R(X,Y)=κ\bigl([X^\H,Y^\H]\bigr)=ν_π\bigl([X^\H,Y^\H]-[X,Y]^\H\bigr)
\]
for $X,Y∈\vectorfields{M}$.

\begin{theorem}
\label{curvature}
The curvature of the linear connection $\ol{\Hor}$ can be written as
\[
\curv(Y_1,Y_2)σ=-D_{σ^\V}\Bigl(R(X_1,X_2)+D_{X_1^\H}η_2-D_{X_2^\H}η_1\Bigr)
\]
for $\Hor$-basic vector fields $Y_i=X_i^\H+η_i^\V$, $i=1,2$, and any $σ∈\sec[π]{E}$. 
\end{theorem}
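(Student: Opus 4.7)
The plan is to compute $\curv(Y_1,Y_2)σ$ from its standard definition,
\[
\curv(Y_1,Y_2)σ = D_{Y_1}D_{Y_2}σ - D_{Y_2}D_{Y_1}σ - D_{[Y_1,Y_2]}σ,
\]
and reduce everything to brackets of vector fields on $E$, which can then be expanded using $Y_i = X_i^\H+η_i^\V$.

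The first step exploits that $Y_1,Y_2$ are $\Hor$-basic. By Proposition~\ref{derivation.vs.covariant.derivative} the operators $D_{Y_i}$ and $\cald^{Y_i}$ coincide on such vector fields, and by Proposition~\ref{linearization.of.a.vector.field}\thetag{5} the derivations $\cald^{Y_i}$ satisfy $[\cald^{Y_1},\cald^{Y_2}] = \cald^{[Y_1,Y_2]}$. Combining these gives $D_{Y_1}D_{Y_2}σ - D_{Y_2}D_{Y_1}σ = \cald^{[Y_1,Y_2]}σ$. Note that $[Y_1,Y_2]$ is still projectable (onto $[X_1,X_2]$) but it is generally not $\Hor$-basic, so at this stage we apply Proposition~\ref{derivation.vs.covariant.derivative} in its general form to the projectable field $[Y_1,Y_2]$, yielding $\cald^{[Y_1,Y_2]}σ - D_{[Y_1,Y_2]}σ = -D_{σ^\V}(κ([Y_1,Y_2]))$. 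Substituting back we obtain
\[
\curv(Y_1,Y_2)σ = -D_{σ^\V}\bigl(κ([Y_1,Y_2])\bigr),
\]
and the theorem reduces to identifying $κ([Y_1,Y_2])$.

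The final step is to expand the bracket
\[
[Y_1,Y_2] = [X_1^\H,X_2^\H] + [X_1^\H,η_2^\V] - [X_2^\H,η_1^\V] + [η_1^\V,η_2^\V]
\]
and apply $κ$ term by term. The first term gives $κ([X_1^\H,X_2^\H]) = R(X_1,X_2)$ by the very definition of the nonlinear curvature. For each mixed bracket, Theorem~\ref{covariant.derivative} with $W = X_i^\H$ (so that $P_\V(W)=0$) yields $κ([X_i^\H,η_j^\V]) = D_{X_i^\H}η_j$. For the last bracket one verifies that the flow of $η_i^\V$ is the fibrewise translation $a \mapsto a + sη_i(π(a))$; these translations commute (since $π$ is invariant along the flow), so $[η_1^\V,η_2^\V]=0$. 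Assembling the pieces,
\[
κ([Y_1,Y_2]) = R(X_1,X_2) + D_{X_1^\H}η_2 - D_{X_2^\H}η_1,
\]
which, inserted in the displayed expression for $\curv(Y_1,Y_2)σ$ above, gives exactly the formula in the statement.

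I expect the only delicate point to be the intrinsic verification that $[η_1^\V,η_2^\V] = 0$ for vertical lifts of basic sections in the Banach setting: arguing from commuting flows (rather than from a coordinate computation) is the cleanest approach and keeps the proof in the generality of the paper. The remainder of the argument is essentially a bookkeeping assembly of Propositions~\ref{linearization.of.a.vector.field} and~\ref{derivation.vs.covariant.derivative} with Theorem~\ref{covariant.derivative} and the definition of the nonlinear curvature $R$.
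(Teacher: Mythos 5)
Your proposal is correct and follows essentially the same route as the paper's proof: both reduce the curvature to $\curv(Y_1,Y_2)σ=-D_{σ^\V}\bigl(κ([Y_1,Y_2])\bigr)$ via the identity $D_{Y_i}=\cald^{Y_i}$ for $\Hor$-basic fields, the commutator property $[\cald^{Y_1},\cald^{Y_2}]=\cald^{[Y_1,Y_2]}$, and Proposition~\ref{derivation.vs.covariant.derivative} applied to the (merely projectable) bracket $[Y_1,Y_2]$, and then identify $κ([Y_1,Y_2])=R(X_1,X_2)+D_{X_1^\H}η_2-D_{X_2^\H}η_1$ using Theorem~\ref{covariant.derivative} and $[η_1^\V,η_2^\V]=0$. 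Your only addition is an explicit commuting-flows verification of $[η_1^\V,η_2^\V]=0$, a detail the paper states without proof, and your argument for it is valid in the Banach setting.
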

\begin{proof}
For $Y_1=X_1^\H+η_1^\V$, $Y_2=X_2^\H+η_2^\V$ two $\Hor$-basic vector fields we have
\begin{align*}
κ([Y_1,Y_2])
&=κ([X_1^\H+η_1^\V, X_2^\H+η_2^\V])\\
&=κ\bigl([X_1,X_2]^\H +R(X_1,X_2)^\V+[X_1^\H,η_2^\V]-[X_2^\H,η_1^\V]+[η_1^\V,η_2^\V]\bigr)\\
&=R(X_1,X_2)+D_{X_1^\H}η_2-D_{X_2^\H}η_1,
\end{align*}
where we have taken into account that $[η_1^\V,η_2^\V]=0$ and $D_{σ^\V}η_i=κ(Tη_i(σ^\V))=0$, $i=1,2$, because $η_1,η_2$ are basic, and Theorem~\ref{covariant.derivative}. 

From Proposition~\ref{derivation.vs.covariant.derivative} we have
\[
D_{[Y_1,Y_2]}σ=\cald^{[Y_1,Y_2]}σ+D_{σ^\V}(κ([Y_1,Y_2])).
\]
From Proposition~\ref{linearization.of.a.vector.field} \thetag{5} we have $[\cald^{Y_1},\cald^{Y_2}]=\cald^{[Y_1,Y_2]}$. Thus for $\Hor$-basic vector fields $Y_1$ and $Y_2$ and any section $σ∈\sec[π]{E}$
\begin{align*}
[D_{Y_1},D_{Y_2}]σ
=[\cald^{Y_1},\cald^{Y_2}]σ
=\cald^{[Y_1,Y_2]}σ
=D_{[Y_1,Y_2]}σ-D_{σ^\V}(κ([Y_1,Y_2])).
\end{align*}
It follows that 
\[
\curv(Y_1,Y_2)σ=-D_{σ^\V}(κ([Y_1,Y_2])),
\]
and taking into account the expression of $κ([Y_1,Y_2])$ above for $Y_i=X_i^\H+η_i^\V$, $i=1,2$, we get 
\[
\curv(Y_1,Y_2)σ=-D_{σ^\V}\Bigl(R(X_1,X_2)+D_{X_1^\H}η_2-D_{X_2^\H}η_1\Bigr),
\]
which proves the statement.
\end{proof}

As particular cases of the expression of the curvature we have
\begin{align*}
\curv(X^\H,Y^\H)σ&=-D_{σ^\V}R(X,Y)\\
\curv(\,η^\V\,,Y^\H)σ&=D_{σ^\V}D_{Y^\H}η\\
\curv(\,η^\V\,,\,ζ^\V\,)σ&=0,
\end{align*}
for $X,Y∈\vectorfields{M}$, $ζ,η∈\sec{E}$ and $σ∈\sec{π^*E}$. 

In Finsler geometry, where $E=\slit{T}M$, the component $θ(σ,η)Y=\curv(\,η^\V\,,Y^\H)σ$ is known as the Berwald curvature tensor, and the component $\operatorname{Rie}(X,Y)σ=\curv(X^\H,Y^\H)σ$ is usually known as the Finsler Riemannian curvature tensor. A detailed study of the curvature tensor of $\ol{\Hor}$ and its implications for the original connection can be found in~\cite{linearization.1}. Here we just mention that the vanishing of the $\ssize\mathsf{VH}$-component of the curvature tensor characterizes basic connections, that is, the linearization of a non linear connection is basic if and only if $θ=0$. 

In what respect to the flatness of the linearized connection we have the following result.
\begin{theorem}
\label{flat}
The following statements are equivalent:
\begin{enumerate}
\item The linearized connection $\ol{\Hor}$ is flat.
\item The set of $\Hor$-basic vector fields is a Lie subalgebra of $\vectorfields{E}$.
\item If $Y_1,Y_2$ are $\Hor$-basic vector fields then $κ([Y_1,Y_2])$ is a basic section.
\item If $X_1,X_2∈\vectorfields{M}$ and $η_1,η_2∈\sec{E}$ then $R(X_1,X_2)+D_{X_1^\H}η_2-D_{X_2^\H}η_1$ is a basic section.
\end{enumerate}
\end{theorem}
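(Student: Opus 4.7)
The plan is to prove the cycle of equivalences $(2)\Leftrightarrow(3)$, $(3)\Leftrightarrow(4)$, and $(1)\Leftrightarrow(4)$, leaning throughout on two results already available: the curvature formula established in Theorem~\ref{curvature},
\[
\curv(Y_1,Y_2)\sigma=-D_{\sigma^\V}\bigl(\kappa([Y_1,Y_2])\bigr),
\]
and the characterization (noted just after Theorem~\ref{covariant.derivative}) that a section $\tau\in\sec[\pi]{E}$ is basic if and only if $D_w\tau=0$ for every vertical $w$.

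For $(2)\Leftrightarrow(3)$ I would first observe that two $\Hor$-basic vector fields $Y_i=X_i^\H+\eta_i^\V$ are in particular projectable, so $[Y_1,Y_2]$ is projectable with projection $[X_1,X_2]\in\vectorfields{M}$. Hence $P_\H([Y_1,Y_2])=[X_1,X_2]^\H$ is automatically the horizontal lift of a basic vector field, and the only possibly non-basic ingredient of $[Y_1,Y_2]$ is its vertical part $\kappa([Y_1,Y_2])^\V$. Thus $[Y_1,Y_2]$ is $\Hor$-basic precisely when $\kappa([Y_1,Y_2])$ is basic, giving the equivalence at once.

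For $(3)\Leftrightarrow(4)$ I would invoke the identity
\[
\kappa([Y_1,Y_2])=R(X_1,X_2)+D_{X_1^\H}\eta_2-D_{X_2^\H}\eta_1
\]
derived inside the proof of Theorem~\ref{curvature} for $Y_i=X_i^\H+\eta_i^\V$ with $X_i$ and $\eta_i$ basic. Since both sides are identically equal, the assertion that one is basic for all such choices is the assertion that the other is.

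For $(1)\Leftrightarrow(4)$ I would combine the curvature formula with the basic-section criterion: as $\sigma$ varies over $\sec[\pi]{E}$, the vertical lifts $\sigma^\V$ exhaust the vertical tangent spaces of $E$ pointwise, so the vanishing of $D_{\sigma^\V}\bigl(\kappa([Y_1,Y_2])\bigr)$ for every $\sigma$ is equivalent to $\kappa([Y_1,Y_2])$ being a basic section. Hence $\curv(Y_1,Y_2)\equiv 0$ on $\Hor$-basic $Y_1,Y_2$ is equivalent to~(4). To close the loop I would recall that $\curv$ is $\cinfty{E}$-bilinear in its vector-field arguments and that, as already used in the proof of Theorem~\ref{covariant.derivative}, every tangent vector to $E$ is the value at that point of some $\Hor$-basic vector field; so testing $\curv$ on $\Hor$-basic pairs suffices to detect its global vanishing. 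The main subtlety in the whole plan is precisely this final tensoriality-and-spanning step, which must be stated with care in order to upgrade the equivalence at the level of $\Hor$-basic vector fields to the genuine flatness statement~(1).
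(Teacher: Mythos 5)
Your proposal is correct, and three of its four legs coincide with the paper's own proof: the equivalence \thetag{2}$\iff$\thetag{3} via projectability of the bracket, the equivalence \thetag{3}$\iff$\thetag{4} via the identity $κ([Y_1,Y_2])=R(X_1,X_2)+D_{X_1^\H}η_2-D_{X_2^\H}η_1$ extracted from the proof of Theorem~\ref{curvature}, and your direction \thetag{1}$\implies$\thetag{4} is exactly the paper's \thetag{1}$\implies$\thetag{3}. Where you genuinely diverge is in closing the cycle. The paper proves \thetag{2}$\implies$\thetag{1} geometrically: under \thetag{2} the identity $[\calf Y_1,\calf Y_2]=\calf[Y_1,Y_2]$ (Proposition~\ref{linearization.of.a.vector.field}) combined with $Y^{\bar{\H}}=\calf Y$ (Theorem~\ref{horizontal=fibre.derivative}) yields $[Y_1^{\bar{\H}},Y_2^{\bar{\H}}]=[Y_1,Y_2]^{\bar{\H}}$, and since lifts of $\Hor$-basic fields span the horizontal distribution, that distribution is involutive, hence flat. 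You instead prove \thetag{4}$\implies$\thetag{1} algebraically, from the curvature formula together with the $\cinfty{E}$-bilinearity of $\curv$ and the pointwise spanning of $T_aE$ by $\Hor$-basic vector fields, plus the criterion that a section $σ$ along $π$ is basic iff $D_wσ=0$ for all vertical $w$ (noting that the lifts $σ^\V$ exhaust the vertical vectors pointwise). Both routes are sound; the subtlety you flag is real but goes through, and indeed both ingredients appear elsewhere in the paper (the $\cinfty{TE}$-linearity argument in the proof of Theorem~\ref{covariant.derivative}, and the spanning claim in the proof of Theorem~\ref{parallel.transport}). Your version is more elementary in that it bypasses the fibre-derivative machinery entirely, at the cost of tacitly identifying flatness with vanishing of the curvature tensor of the linear connection $\ol{\Hor}$ — an identification equivalent to involutivity of the horizontal distribution, and one the paper itself uses implicitly in its \thetag{1}$\implies$\thetag{3} step — whereas the paper's involutivity argument makes visible the geometric content of \thetag{2}, namely that the Lie subalgebra property of $\Hor$-basic fields is precisely what forces brackets of horizontal lifts to stay horizontal.
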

\begin{proof}
If $Y_1,Y_2$ are projectable then $[Y_1,Y_2]$ is projectable. Therefore, for $Y_1,Y_2$ two $\Hor$-basic vector fields the bracket $[Y_1,Y_2]$ is $\Hor$-basic if and only if $κ[Y_1,Y_2]$ is basic. This proves the equivalence \thetag{2} $\iff$ \thetag{3}.

In the proof of Theorem~\ref{curvature} we found 
\[
κ([Y_1,Y_2])=R(X_1,X_2)+D_{X_1^\H}η_2-D_{X_2^\H}η_1,
\]
from where it is clear the equivalence \thetag{3} $\iff$ \thetag{4}.
 
\thetag{2}$\implies$\thetag{1}. For two projectable vector fields $Y_1,Y_2$ we know that $[\calf Y_1,\calf Y_2]=\calf [Y_1,Y_2]$. From Theorem~\ref{horizontal=fibre.derivative}, if $Y_1$ $Y_2$ and $[Y_1,Y_2]$ are $\Hor$-basic then this equation reads $[Y_1^{\bar{\H}},Y_2^{\bar{\H}}]=[Y_1,Y_2]^{\bar{\H}}$. Since the horizontal distribution is spanned by $\Hor$-basic vector fields it follows that $\Hor$ is an involutive distribution, and hence it is a flat connection.

\thetag{1}$\implies$\thetag{3} follows from the expression of the curvature in Theorem~\ref{curvature}. If $Y_1,Y_2$ are $\Hor$-basic and the connection $\ol{\Hor}$ is flat then $\curv(Y_1,Y_2)σ=-D_{σ^\V}(κ([Y_1,Y_2]))=0$, for all $σ∈\sec{π^*E}$. It follows that $κ([Y_1,Y_2])$ is a basic section, and hence $[Y_1,Y_2]$ is $\Hor$-basic.
\end{proof}

\section{The jet bundle approach}
\label{section:jets}

An Ehresmann connection on a bundle $\map{τ}{P}{N}$ can be equivalently seen as a section of the first jet bundle $\map{τ_{10}}{J^1τ}{P}$. We refer to the reader to~\cite{Saunders} for the notation and other details in jet bundle theory.

The 1-jet $j^1_nσ$ of a local section $σ$ of $τ$ at a point $n∈N$ can be identified with the tangent map $\map{T_nσ}{T_nN}{T_{σ(n)}P}$. Conversely, given a linear map $\map{ϕ}{T_nN}{T_pP}$ such that $T_pτ∘ϕ=\id_{T_nN}$ there exists a local section $σ$ of $τ$ such that $σ(n)=p$ and $T_nσ=ϕ$. Therefore it make sense to write $j^1_mσ(v)$ for $v∈T_mM$ with the meaning $j^1_mσ(v)\equiv T_mσ(v)$. The projection $τ_{10}$ is defined by $τ_{10}(j^1_nσ)=σ(n)$, and is said to be the target projection. The composition $\map{τ_1=τ○τ_{10}}{J^1τ}{N}$ is said to be the source projection.

If $\map{τ'}{P'}{N}$ is another fiber bundle, a fibered map $\map{ψ}{P}{P'}$ over the identity in $N$ induces a map $\map{j^1ψ}{J^1τ}{J^1τ'}$ fibered over $ψ$ given by $j^1ψ(j^1_nσ)=j^1_n(ψ○σ)$. 

\smallskip

Given a connection on $\map{τ}{P}{N}$ the corresponding horizontal lift $\hlift$ determines
a section $h$ of $τ_{10}$. Indeed, $h(p)=\hlift(p,\,·\,):T_{τ(p)}N→T_pP$ satisfies $Tτ∘h(p)=\id_{T_{τ(p)}N}$ and hence it is a 1-jet with target $p$. Conversely, any section $h$ of $τ_{10}$ defines a connection on $\map{τ}{P}{N}$ by means of the horizontal lift $\hlift(p,v)=h(p)(v)$. Therefore a connection on $\map{τ}{P}{N}$ is but a section of $\map{τ_{10}}{J^1τ}{P}$.

When $\map{τ}{P}{N}$ is a vector bundle then the fiber bundle $\map{τ_1}{J^1τ}{N}$ inherits a vector bundle structure with the operations
\[
j^1_nσ+j^1_nη=j^1_n(σ+η)\quad\text{and}\quad λ·j^1_nσ=j^1_n(λσ),
\]
for $σ,η∈\sec{τ}$ and $λ∈ℝ$. The target map $\map{τ_{10}}{J^1τ}{P}$ is a vector bundle morphism over the identity in $M$. A connection defined by a section $h$ of $τ_{10}$ is a linear connection if $h$ a linear section of $τ_{10}$, or in other words, if $h$ is a vector bundle morphism from $\map{τ}{P}{N}$ to $\map{τ_1}{J^1τ}{N}$ over the identity in $N$.

\medskip

In our concrete case, given a non linear connection $h∈\sec{π_{10}}$ on the vector bundle $\map{π}{E}{M}$ we know that the linearization of $h$ is a linear connection on the vector bundle $\map{π^*π}{π^*E}{E}$, and hence it is given by a section $\bar{h}∈\sec{(π^*π)_{10}}$ of the first jet bundle $J^1(π^*π)$ of sections of $π^*π$. In this section we provide a direct construction of $\bar{h}$ by using the geometry of jet bundles.

\smallskip

\begin{wrapfigure}{l}{0.36\textwidth}
\hskip-15pt
$
\xymatrix{%
&E\times_ME\ar@{<->}[d]_{\simeq}\ar[r]^-{\ul{\calf h}}&J^1\mathsf{p}\ar@{<->}[d]^{\simeq}\\
&π^*E\ar[d]_{π^*π}\ar[r]^-{\calf h}&π_1^*(J^1π)\ar[d]^{\pr_1}\\
&E\ar[d]_{π}\ar[r]^{h}&J^1π\ar[d]^{π_1}\\
&M\ar[r]^{\id}&M
}
$
\end{wrapfigure}
We can consider $h$ as a (generally non linear) bundle map $\map{h}{E}{J^1π}$ from the vector bundle $\map{π}{E}{M}$ to the vector bundle $\map{π_1}{J^1π}{M}$ fibered over the identity in $M$. Thus we can construct the fibre derivative obtaining a map $\map{\calf h}{π^*E}{π_1^*(J^1π)}$. 

If we consider the bundle $\map{\mathsf{p}}{E\times_ME}{M}$, the manifold $π_1^*(J^1π)$ can be identified with $J^1\mathsf{p}$ by identifying $(j^1_mσ,j^1_mη)\simeq j^1_m(σ,η)$. The projection onto the first factor $(j^1_mσ,j^1_mη)\mapsto j^1_mσ$ is then identified with the first jet prolongation $j^1\pr_1$ of the projection $\map{\pr_1}{E\times_ME}{E}$ onto the first factor. Thus we obtain a map $\map{\ul{\calf h}}{E\times_ME}{J^1\mathsf{p}}$. Taking into account that $π_{10}∘h=\id_E$ we have that $\calf π_{10}∘\calf h=\calf \id_E=\id_{π^*E}$. Moreover $\calf π_{10}=\mathsf{p}_{10}$, so that we have proved that $\ul{\calf h}$ is a section of $\mathsf{p}_{10}$ and hence a connection on $\map{\mathsf{p}}{E\times_ME}{M}$. 

However this procedure does not produce a connection on $\map{π^*π}{π^*E}{E}$ but a connection on the bundle $\map{\mathsf{p}}{E\times_ME}{M}$. In order to obtain a connection on $π^*E$ we need a further step.

\paragraph{The canonical immersion}
As we know, a section of $E$ determines a section of $π^*E$, which up to this point has been denoted by the same symbol. It is now convenient to distinguish between them. Consider the map $\map{\imath}{\sec{π}}{\sec{π^*π}}$ given by 
\[
\imath(σ)(a)=\bigl(a,σ(π(a)\bigr),\qquad\text{for all $a∈E$.}
\]
The map $\imath$ induces a canonical immersion $\map{Υ}{π^*(J^1π)}{J^1(π^*π)}$ by means of
\[
Υ(a,j^1_mσ)=j^1_a\bigl(\imath(σ)\bigr).
\]
Interpreting a 1-jet with source $m$ and target $b$ as a linear map $\map{ϕ}{T_mM}{T_bE}$ the expression of $Υ$ is simply
\[
Υ(a,ϕ)=\bigl(\id_{T_aE},ϕ∘T_aπ\bigr).
\]

The canonical immersion is a vector bundle map over the identity in $E$ and, with respect to the target projection, it satisfies 
\[
(π^*π)_{10}∘Υ=(\pr_1,π_{10}∘\pr_2),
\]
or in more explicit terms $(π^*π)_{10}\bigl(Υ(a,j^1_mσ)\bigr) = (a,σ(m))$, where $m=π(a)$.

\smallskip

Basic and semibasic connections can be characterized in terms of the canonical immersion $Υ$. 
\begin{proposition}
\label{characterization.semibasic}
Let $h∈\sec{(π^*π)_{10}}$ be a connection on $π^*E$.
\begin{itemize}
\item $h$ is basic if and only if there exists a connection $\ul{h}∈\sec{π_{10}}$ on $E$ such that $h(a,b)=Υ\bigl(a,\ul{h}(b)\bigr)$ for all $(a,b)∈π^*E$.  
\item $h$ is semibasic if and only if $\Im(h)⊂\Im(Υ)$.
\end{itemize}
\end{proposition}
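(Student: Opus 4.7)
My approach is to unpack both sides of each equivalence in terms of the linear map $B(a,b)\colon T_aE\to T_bE$ that encodes a connection on $π^*E$ (as in Theorem~\ref{linear.connection.0}), and then read off the result directly from the formula $Υ(a,φ)=\bigl(\id_{T_aE},φ∘T_aπ\bigr)$.

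First I would fix the dictionary. A section $h∈\sec{(π^*π)_{10}}$ assigns to every $(a,b)∈π^*E$ a 1-jet with target $(a,b)$, which, under $T(π^*E)\simeq(Tπ)^*(TE)$, is precisely a linear map of the form $w\mapsto (w,B(a,b)w)$ with $B(a,b)\colon T_aE\to T_bE$ satisfying $T_bπ∘B(a,b)=T_aπ$. Similarly, an element of $J^1π$ with target $b$ is a linear map $φ\colon T_{π(b)}M\to T_bE$ with $Tπ∘φ=\id$, so that $Υ(a,φ)$ corresponds to the $B$-map $B=φ∘T_aπ$.

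For the first bullet, the connection $h$ is basic precisely when there is $\ul{h}∈\sec{π_{10}}$, with horizontal lift $ξ^{\ul{\H}}$, such that $B(a,b)w=ξ^{\ul{\H}}(b,Tπ(w))=\ul{h}(b)(T_aπ(w))$ for all $w$. Comparing with the expression $Υ(a,\ul{h}(b))(w)=(w,\ul{h}(b)(T_aπ(w)))$, this is exactly the equality $h(a,b)=Υ(a,\ul{h}(b))$. Conversely, given such $\ul h$, reading off the second component of $Υ(a,\ul h(b))$ exhibits $h$ as the pullback of $\ul h$, so the equivalence is immediate in both directions.

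For the second bullet, semibasic means $B(a,b)w=0_b$ for every vertical $w∈\ver[a]{π}=\ker T_aπ$. Since $T_aπ$ is surjective, $B(a,b)$ factors as $φ∘T_aπ$ for some linear $φ\colon T_{π(a)}M\to T_bE$ if and only if $\ker T_aπ\subseteq \ker B(a,b)$; the identity $T_bπ∘B(a,b)=T_aπ$ then translates into $T_bπ∘φ=\id$, so that such a $φ$ is automatically a 1-jet with target $b$. Hence $h(a,b)∈\Im(Υ)$ at every point if and only if $B(a,b)$ kills vertical vectors, which is the semibasic condition.

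The only place requiring care is the identification of jets with linear maps between tangent spaces and, correspondingly, the compatibility of the target projections under $Υ$; but this was already arranged in the paragraph defining $Υ$, so after that both equivalences reduce to checking the tautology that $B(a,b)=φ∘T_aπ$ if and only if $B(a,b)$ annihilates the vertical subspace. I do not foresee any substantive obstacle beyond keeping the two different vector bundle structures on $TE$ straight when writing $T(π^*E)\simeq(Tπ)^*(TE)$.
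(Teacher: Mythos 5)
Your argument is correct and is essentially the paper's own proof: both identify $h(a,b)$ with the linear map $B(a,b)$ (the paper's $\Phi$) satisfying $T_b\pi\circ B(a,b)=T_a\pi$, note that $\Upsilon(a,\phi)$ corresponds to $\phi\circ T_a\pi$, and settle the semibasic bullet by factoring $B(a,b)$ through $T_a\pi$, which is possible exactly when $B(a,b)$ annihilates the vertical subspace. The paper merely spells out what you invoke as the kernel-inclusion criterion, explicitly constructing $\phi(v)=\Phi(\tilde v)$ from a lift $\tilde v$ of $v$ and verifying $T_b\pi\circ\phi=\mathrm{id}$, so there is no substantive difference between the two arguments.
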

\begin{proof}
Let $\hlift$ the horizontal lifting associated to $h$, i.e. $\hlift((a,b),w)=h(a,b)(w)$. If $\hlift$ is basic then there exists a connection $\ul{h}∈\sec{π_{10}}$ such that $\hlift((a,b),w)=\bigl(w,ξ^{\ul{\H}}(b,Tπ(w))\bigr)$, where $ξ^{\ul{\H}}$ is the horizontal lift associated to $\ul{h}$. In terms of $h$ and $\ul{h}$ this equation reads $h(a,b)(w)=\bigl(w,\ul{h}(b)(Tπ(w))\bigr)$, or in other words $h(a,b)=\bigl(\id_{T_aE},\ul{h}(b)○T_aπ\bigr)=Υ(a,\ul{h}(b))$. 

Conversely, if $h$ and $\ol{h}$ are related by $h(a,b)=Υ\bigl(a,\ul{h}(b)\bigr)$, then for every $w∈T_aE$ we have $\hlift\bigl((a,b),w\bigr)=h(a,b)(w)=Υ\bigl(a,\ul{h}(b)\bigr)(w)=\bigl(w,\ul{h}(b)(w)\bigr)=\bigl(w,ξ^{\ul{\H}}(b,w)\bigr)$, so that the connection $h$ is basic (and the pullback of the connection $\ul{h}$).

Let $h∈\sec{(π^*π)_{10}}$ be a semibasic connection on $π^*E$, so that $h(a,b)(w)=(w,0_b)$ whenever $w∈\ver[a]{π}$.  It follows that $h(a,b)$ is of the form $h(a,b)=(\id_{T_aE},Φ)$ for some linear map $\map{Φ}{T_aE}{T_bE}$ such that $T_bπ○Φ=T_aπ$ and $Φ(w)=0$ for all $w∈\ver[a]{π}$. This implies that $\ver[a]{π}=\ker(Φ)$: obviously $\ver[a]{π}⊂\ker(Φ)$ and if $w∈\ker(Φ)$ then $T_aπ(w)=T_bπ○Φ(w)=0$, so that $w∈\ver[a]{π}$. Defining the linear map $\map{ϕ}{T_mM}{T_bE}$ by $ϕ(v)=Φ(\tilde{v})$ where $\tilde{v}$ is any vector in $T_aE$ such that $T_aπ(\tilde{v})=v$, we have that $T_bπ○ϕ=\id_{T_mM}$ and $Φ$ factorizes as $Φ=ϕ○T_aπ$. Thus $ϕ$ is a 1-jet and $h(a,b)=(\id_{T_aE},ϕ○T_aπ)=Υ(a,ϕ)$.

Conversely, let $h$ be a connection on $π^*π$ such that $\Im(h)⊂\Im(Υ)$. For every $(a,b)∈π^*E$ there exists a map $\map{ϕ_b}{T_mM}{T_bE}$ such that $h(a,b)=Υ(a,ϕ_b)=(\id_{T_aE},ϕ_b○T_aπ)$. If $w∈T_aE$ is vertical then $\hlift\bigl((a,b),w\bigr)=h(a,b)(w)=\bigl(w,ϕ_b(Tπ(w))\bigr)=(w,0_b)$. It follows that $h$ is semibasic.
\end{proof}

From the canonical immersion it is convenient to construct the map $\map{Ψ}{π_1^*(J^1π)}{J^1(π^*π)}$ given by $Ψ(j^1_mσ,j^1_mη)=Υ(σ(m),j^1_mη)$. The map $Ψ$ is linear and satisfies $(π^*π)_{10}∘Ψ=(π_{10},π_{10})$. Proposition~\ref{characterization.semibasic} can be re-stated in terms of $Ψ$ as follows: a connection $h$ is basic if and only if  there exists a connection $\ul{h}$ on $E$ such that $h(a,b)=Ψ(\ul{h}(a),\ul{h}(b))$; a connection $h$ is semibasic if and only if $\Im(h)⊂\Im(Ψ)$.

With the help of the map $Ψ$ we can easily obtain the linearization of the connection $h$.

\begin{theorem}
\label{linearization.jet.pullback.version}
Let $h∈\sec{π_{10}}$ be a non linear connection on $E$. The map $\bar{h}=Ψ○\calf h∈\sec{(π^*π)_{10}}$ is the section corresponding to the linearization of the connection $h$.
\end{theorem}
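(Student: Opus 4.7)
The plan is to verify directly that $\bar h = \Psi \circ \mathcal F h$ is a well-defined section of $(\pi^*\pi)_{10}$ and then show that, as a linear map on each $T_aE$, it reproduces the horizontal lift $(w, B(a,b)w)$ of Theorem~\ref{linear.connection.0}. Uniqueness of the connection determined by a section of $(\pi^*\pi)_{10}$ will then yield the identification.

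First I would check the section property. Since $h\in\sec{\pi_{10}}$ we have $\pi_{10}\circ h=\id_E$; applying the fibre-derivative functor and using $\mathcal F\id_E=\id_{\pi^*E}$ together with the identity $\mathcal F\pi_{10}=\mathsf p_{10}$ already noted in the excerpt, one gets $\mathsf p_{10}\circ\mathcal Fh=\id_{\pi^*E}$. Composing with $\Psi$ and invoking the relation $(\pi^*\pi)_{10}\circ\Psi=(\pi_{10},\pi_{10})$ yields $(\pi^*\pi)_{10}\circ\bar h=(\pi_{10},\pi_{10})\circ\mathcal F h=\id_{\pi^*E}$, so $\bar h$ is indeed a section of the target projection.

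Next I would unwind the definition. By construction $\bar h(a,b)=\Psi(h(a),\mathcal F_a h(b))=\Upsilon(a,\mathcal F_a h(b))=\bigl(\id_{T_aE},\,\mathcal F_ah(b)\circ T_a\pi\bigr)$, so evaluating on $w\in T_aE$ gives $\bar h(a,b)(w)=\bigl(w,\mathcal F_ah(b)(T_a\pi(w))\bigr)$. It then remains to identify the value of the linear map $\mathcal F_ah(b)\colon T_{\pi(a)}M\to T_bE$ at an argument $v\in T_{\pi(a)}M$ with $\nu_{T\pi}\bigl(\tfrac{d}{ds}\hlift(a+sb,v)\at{s=0}\bigr)$. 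Setting $v=T_a\pi(w)$ this is precisely $B(a,b)w$, and therefore $\bar h(a,b)(w)=(w,B(a,b)w)=\xi^{\bar\H}\bigl((a,b),w\bigr)$, which proves the theorem.

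The main obstacle is the identification in the previous paragraph: one has to recognise how the fibre-derivative $\mathcal F_ah(b)=\nu_{\pi_1}\bigl(\tfrac{d}{ds}h(a+sb)\at{s=0}\bigr)$, a priori an element of $(J^1\pi)_{\pi(a)}$, acts on a vector $v\in T_{\pi(a)}M$. The key observation is that the vector bundle structure of $\pi_1\colon J^1\pi\to M$ is pointwise induced from that of $T\pi\colon TE\to TM$: addition and scalar multiplication of $1$-jets with common source $m$ coincide, after evaluation at any $v\in T_mM$, with $\Tplus$ and $\Ttimes$ in the fibre of $T\pi$ over $v$. Consequently the vertical lift $\vlift_{\pi_1}$ at $h(a)$, evaluated at $v$, agrees with $\vlift_{T\pi}$ at $\hlift(a,v)$, and the vertical projection $\nu_{\pi_1}$ restricts pointwise to $\nu_{T\pi}$, giving exactly the desired formula with $v=T_a\pi(w)$.
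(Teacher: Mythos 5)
Your proof is correct, but it takes a genuinely different route from the paper's. The paper only verifies that $\bar h$ is a section of $(\pi^*\pi)_{10}$ (computing $\pi_{10}\bigl(\calf_ah(b)\bigr)=b$ by an explicit difference quotient using linearity of $\pi_{10}$), then observes that $\bar h$ is semibasic by construction ($\Im(\bar h)\subset\Im(\Psi)$, Proposition~\ref{characterization.semibasic}) and is a natural first order prolongation of $h$, and concludes via the uniqueness statement of Proposition~\ref{natural+=linearization}; checking linearity of $\bar h$ instead of semibasicness is mentioned only as an alternative. You instead identify the connection pointwise: unwinding $\bar h(a,b)=\Upsilon\bigl(a,\calf_ah(b)\bigr)=\bigl(\id_{T_aE},\calf_ah(b)\circ T_a\pi\bigr)$ and proving $\calf_ah(b)\bigl(T_a\pi(w)\bigr)=B(a,b)w$, so that $\bar h$ reproduces the horizontal lift $\xi^{\bar{\H}}$ of Theorem~\ref{linear.connection.0} directly. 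The hinge of your argument is sound: evaluation at $v\in T_mM$ is linear from the fibre of $\pi_1$ with the jet operations to the fibre of $T\pi$ over $v$ with $(\Tplus,\Ttimes)$, since $T_m(\sigma+\eta)(v)=T_m\sigma(v)\Tplus T_m\eta(v)$ and $T_m(\lambda\sigma)(v)=\lambda\Ttimes T_m\sigma(v)$, hence it intertwines $\nu_{\pi_1}$ with $\nu_{T\pi}$; this is exactly what justifies the identity $B(a,b)=\calf_ah(b)\circ T_a\pi$, which the paper records only as a remark \emph{after} the theorem, as a consequence rather than as its proof. What each approach buys: the paper's argument is shorter and leverages the classification of natural prolongations already established for Banach bundles in Section~\ref{section:natural.prolongation}, whereas yours is self-contained — it bypasses the classification entirely, proves the remark en route, and remains available in situations where the naturality argument is awkward, for instance for connections defined only on a restricted domain $C\subset E$, where the scaling morphisms used in the classification proof need not preserve the domain.
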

\begin{proof}
Using the linearity of $π_{10}$ we have that 
\begin{align*}
π_{10}\bigl(\calf_a h(b)\bigr)
&=π_{10}\Bigl(\lim_{s→0}\frac{1}{s}[h(a+sb)-h(a)]\Bigr)\\
&=\lim_{s→0}\frac{1}{s}\bigl[π_{10}\bigl(h(a+sb)\bigr)-π_{10}\bigl(h(a)\bigr)\bigr]\\
&=\lim_{s→0}\frac{1}{s}[(a+sb)-a]\\
&=b.
\end{align*}
It follows that the map $\bar{h}$ is a section of $(π^*π)_{10}$:
\[
(π^*π)_{10}\bigl(\bar{h}(a,b)\bigr)=(π^*π)_{10}\bigl(Ψ(\calf h(a,b))\bigr)=\bigl(π_{10}(h(a)), π_{10}(\calf_ah(b))\bigr)=(a,b).
\]
Moreover, the connection $\bar{h}$ is semibasic by construction (and Proposition~\ref{characterization.semibasic}) and a first order natural prolongation of $h$. It follows from Proposition~\ref{natural+=linearization} that $\bar{h}$ is the linearization of $h$.

Alternatively, one can easily check that $\bar{h}$ is linear, because $Ψ$ and $\calf_a h$ are both linear maps.
\end{proof}

Notice that the map $B(a,b)$ associated to the linearization is given in this context by $B(a,b)=\calf_a h(b)○T_aπ$.

\begin{remark}
A construction for a kind of linearization of a non linear connection is given in~\cite[\S 2.7]{CrSa.book}. However, the result of the construction in~\cite{CrSa.book} is not a connection on $π^*E$, but a connection on $\map{\mathsf{p}}{E\times_ME}{E}$ (misleadingly identified there with the pullback bundle) which is identical to the connection $\ul{\calf h}$. The final step in~\cite{CrSa.book}, using the map $\map{\imath_0}{J^1π}{J^1\mathsf{p}}$, $\imath_0(j^1_mσ)=j^1_m(σ,0_π)$, produces a map $(a,b)\mapsto \imath_0(\calf_ah(b))$ which projects to $(a,0_m)$, and hence it is not a connection on the pullback bundle $π^*E$.
\end{remark}

\subsection*{Construction on the vertical bundle}

The pullback bundle $π^*E$ is isomorphic to the vertical bundle $\ver{π}$. Hence a construction of the linearized connection in the vertical bundle is possible. In this subsection we provide such construction, that is, given a non linear connection $\map{h}{E}{J^1π}$ on $\map{π}{E}{M}$ we will define a linear connection $\map{\bar{h}_\V}{\ver{π}}{J^1τ_E^\V}$, on the vector bundle $\map{τ_E^\V}{\ver{π}}{E}$ which corresponds to $\bar{h}$.

\smallskip

Consider the vector bundle $\map{\mathsf{p}_\V}{\ver{π}}{M}$. We recall that there exists a canonical isomorphism $\map{\jvj}{\ver{π_1}}{J^1\mathsf{p}_\V}$ defined by 
\[
\jvj\Bigl(\frac{d}{ds}j^1_mσ_s\at{s=0}\Bigr)=j^1_m\Bigl(\frac{d}{ds}σ_s\at{s=0}\Bigr),
\]
where $σ_s$ is a curve in the set of sections of $E$. See~\cite[\S 5]{MaMo} for the details.

Consider the connection $\map{h}{E}{J^1π}$. Given a vertical vector $z=\vlift_π(a,b)∈\Ver_a{π}$ we consider the curve $s\mapsto h(a+sb)∈J^1π$ and we take the tangent vector at $s=0$. This is a vector on $J^1π$ vertical over $M$ (due to $π_1○h=π$) to whom we may apply the diffeomorphism $\jvj$ obtaining an element in $J^1\mathsf{p}_\V$,
\[
\calv h(z)=\jvj\Bigl(\frac{d}{ds}h(a+sb)\at{s=0}\Bigr)∈J^1\mathsf{p}_\V,\qquad z=\vlift_π(a,b).
\]
It is easy to see that $\calv h$ is a linear connection on the bundle $\ver{π}\to M$.

\begin{remark}
Under the identification of $E\times_ME→M$ with $\ver{π}→M$ given by $\vlift_π$, the connection $\ul{\calf h}$ corresponds to the unique natural prolongation $\calv h$ to a connection on $\ver{π}→M$ defined in~\cite[\S 31.1]{NatOp}. Thus the connection $\ul{\calf h}$ that we have obtained is the unique natural prolongation of $h$ to a connection on  
$\map{\mathsf{p}}{E\times_ME}{M}$.
\end{remark}

A section $V$ of $\map{\mathsf{p}_\V}{\ver{π}}{M}$ defines a section $\imath_\V(V)$ of $\map{τ_E^\V}{\ver{π}}{E}$ by means of 
\[
\imath_\V(V)(a)=\vlift_π\bigl(a,ν_π(V(π(a)))\bigr),\qquad\text{for all $a∈E$.}
\]
This relation induces a map $\map{Ψ_\V}{J^1\mathsf{p}_\V}{J^1τ_E^\V}$ given by
\[
Ψ_\V(j^1_mV)=j^1_{τ_E(V(m))}\bigl(\imath_\V(V)\bigr).
\]
The map $Ψ_\V$ satisfies $(τ_E^\V)_{10}∘Ψ_\V=(\mathsf{p}_\V)_{10}$, or in other words $(τ_E^\V)_{10}∘Ψ_\V(j^1_mV) = V(m)$ for all $j^1_mV∈J^1\mathsf{p}_\V$.

In the following result $j^1\vlift_{π,E}$ denotes the 1-jet prolongation of the map $\vlift_π$ fibered over the identity in~$E$, and $j^1\vlift_{π,M}$ denotes the 1-jet prolongation of the map $\vlift_π$ fibered over the identity in~$M$.

\begin{theorem}
\label{linearization.jet.vertical.version}
Let $h∈\sec{π_{10}}$ be a non linear connection on $E$. The map $\map{\bar{h}_\V}{\ver{π}}{J^1τ_E^\V}$ defined by $\bar{h}_\V=Ψ_\V∘\calv h$ is a linear connection on the vertical bundle $\map{τ_E^\V}{\ver{π}}{E}$ related with the connection $\bar{h}$ by $j^1\vlift_{π,E}∘\bar{h}=\bar{h}_\V∘\vlift_π$.
\end{theorem}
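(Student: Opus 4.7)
The plan is to first establish the intertwining relation $j^1\vlift_{π,E}∘\bar{h}=\bar{h}_\V∘\vlift_π$; the linear-connection property of $\bar{h}_\V$ will then follow formally from Theorem~\ref{linearization.jet.pullback.version} by conjugating $\bar{h}$ with the vector bundle isomorphism $\vlift_π$ over $E$, whose 1-jet prolongation $j^1\vlift_{π,E}$ is itself a vector bundle isomorphism over $E$.

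To prove the intertwining identity I would factor it through two commuting squares, mirroring the two-step construction $\bar{h}_\V=Ψ_\V∘\calv h$. The first square expresses that $\calv h$ corresponds to $\calf h$ via the vertical lift over $M$: after identifying $π^*E\simeq E\times_ME$ and $π_1^*(J^1π)\simeq J^1\mathsf{p}$, one checks that
\[
\calv h∘\vlift_π = j^1\vlift_{π,M}∘\calf h.
\]
This is verified directly on $(a,b)∈π^*E$: both sides encode the same tangent vector $\frac{d}{ds}h(a+sb)\at{s=0}∈\ver[h(a)]{π_1}$, the right-hand side via the vertical lift on the vector bundle $\map{π_1}{J^1π}{M}$, and the left-hand side via the canonical isomorphism $\jvj$, which are related by the naturality of the vertical lift applied to the bundle morphism $\vlift_π$.

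The second square expresses that the canonical immersions intertwine correctly,
\[
j^1\vlift_{π,E}∘Ψ = Ψ_\V∘ j^1\vlift_{π,M},
\]
which reduces, at the level of representing sections, to the pointwise identity $\imath_\V(\vlift_π∘σ)=\vlift_π∘\imath(σ)$ for $σ∈\sec{π}$. Prolonging this to 1-jets and composing the two squares then yields
\[
\bar{h}_\V∘\vlift_π=Ψ_\V∘\calv h∘\vlift_π=Ψ_\V∘ j^1\vlift_{π,M}∘\calf h=j^1\vlift_{π,E}∘Ψ∘\calf h=j^1\vlift_{π,E}∘\bar{h},
\]
as required.

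The main obstacle I foresee is the bookkeeping of the two different jet prolongations $j^1\vlift_{π,E}$ and $j^1\vlift_{π,M}$ together with the canonical isomorphism $\jvj$, and checking that the source and target projections match at every step of the chain. Once this bookkeeping is settled the remainder is a diagram chase, and the linearity and the section property of $\bar{h}_\V$ drop out at once from the corresponding properties of $\bar{h}$ combined with the fact that $\vlift_π$ and $j^1\vlift_{π,E}$ are vector bundle isomorphisms over $E$.
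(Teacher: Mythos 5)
Your proposal is correct and follows essentially the same route as the paper: the paper establishes the identity $\bar{h}_\V∘\vlift_π=j^1\vlift_{π,E}∘\bar{h}$ by exactly the chain you write, resting on the two auxiliary facts $\jvj∘\vlift_{π_1}=j^1\vlift_{π,M}$ and $Ψ_\V∘j^1\vlift_{π,M}=j^1\vlift_{π,E}∘Ψ$ (your two commuting squares, whose proofs the paper omits and you sketch), and then deduces linearity of $\bar{h}_\V$ from that of $\bar{h}$ via the vector bundle isomorphism $\vlift_π$, just as you do. The only nitpick is notational: your section-level identity should read $\imath_\V\bigl(\vlift_π∘(σ,η)\bigr)=\vlift_π∘\imath(η)$, since $\imath_\V$ acts on sections of $\map{\mathsf{p}_\V}{\ver{π}}{M}$, i.e.\ on vertical lifts of pairs of sections, and its value depends only on the second component.
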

\begin{proof}
The proof is based on the following facts whose proof is omitted. For the canonical isomorphism we have that $\jvj○\vlift_{π_1}=j^1\vlift_{π,M}$, and for the maps $Ψ$ and $Ψ_\V$ we have the relation $Ψ_\V○j^1\vlift_{π,M}=j^1\vlift_{π,E}○Ψ$. Then for all $(a,b)∈π^*E$,
\begin{align*}
\bar{h}_\V\bigl(\vlift_π(a,b)\bigr)
&=Ψ_\V\left(\jvj\Bigl(\frac{d}{ds}h(a+sb)\at{s=0}\Bigr)\right)\\
&=Ψ_\V\left(\jvj\bigl(\vlift_{π_1}(\calf h(a,b))\bigr)\right)\\
&=Ψ_\V\left(j^1\vlift_{π,M}\bigl(\calf h(a,b)\bigr)\right)\\
&=j^1\vlift_{π,E}\left(Ψ\bigl(\calf h(a,b)\bigr)\right)\\
&=j^1\vlift_{π,E}\bigl(\bar{h}(a,b)\bigr).
\end{align*}
As $\bar{h}$ is a linear connection on $π^*π$ and $\vlift_π$ is an isomorphism of vector bundles it follows that $\bar{h}_\V$ is a linear connection on $τ^\V_E$.
\end{proof}

\subsection*{Local coordinate expressions}

In the finite dimensional case, consider a system of local coordinates $(x^i,y^A)$ on~$E$, $(x^i,y^A,z^A)$ on $π^*E$, $(x^i,y^A,y^A_i)$ on $J^1π$, and $(x^i,y^A,z^A,z^A_i,z^A_B)$ in $J^1(π^*π)$. The coordinate expression of $Υ$ is 
\[
Υ\bigl((x^i,y^A),(x^i,z^A,z^A_i)\bigr)=(x^i,y^A,z^A,z^A_i,0),
\]
and hence the map $Ψ$ is
\[
Ψ\bigl((x^i,y^A,y^A_i),(x^i,z^A,z^A_i)\bigr)=(x^i,y^A,z^A,z^A_i,0).
\]
If $h$ is given locally by $h(x^i,y^A)=(x^i,y^A,-Γ^A_i(x,y))$ then
\[
\calf h\bigl((x^i,y^A),(x^i,z^A)\bigr)
=\Bigl(\bigl(x^i,y^A,-Γ^A_i(x,y)\bigr),\bigl(x^i,z^A, -Γ^A_{iB}(x,y)z^B\bigr)\Bigr).
\]
Thus the composition of both maps gives
\begin{align*}
\bar{h}(x^i,y^A,z^A)
&=(x^i,y^A,z^A,-Γ^A_{iB}(x,y)z^B,0).
\end{align*}

In the case of the vertical bundle, we can take coordinates $(x^i,y^A,z^A)$ in $\ver{π}$, $(x^i,y^A,z^A,y^A_i,z^A_i)$ in $J^1\mathsf{p}_\V$, $(x^i,y^A,y^A_i,\dot{y}^A,\dot{y}^A_i)$ in $\ver{π_1}$, and $(x^i,y^A,z^A,z^A_i,z^A_B)$ in $J^1τ_E^\V$, the local expression of $\jvj$ is 
\[
\jvj(x^i,y^A,y^A_i,\dot{y}^A,\dot{y}^A_i)=(x^i,y^A,\dot{y}^A,y^A_i,\dot{y}^A_i).
\]
Due to the choice of the names of the coordinates, the local expression of $\calf h$ is exactly equal to the local expression of $\calv h$, the local expression of $Ψ_\V$ is exactly equal to the local expression of $Ψ$, and the local expression of $\bar{h}_\V$ is exactly equal to the local expression of $\bar{h}$.


\let\sep\newline


\begin{thebibliography}{99}

\bibitem{MTA}
  \textsc{Abraham R, Marden JE, and Ratiu TS}\sep
  \textsl{Manifolds, Tensor analisys and Applications}\sep 
  Applied Mathematical Sciences \textbf{75}, Springer-Verlag, New-York, 1988.
\bibitem{Bao}
  \textsc{Bao D, Chern SS and Shen Z}\sep
  \textsl{An introduction to Riemann–Finsler geometry}\sep 
  Graduate Texts in Mathematics \textbf{200}. Springer-Verlag, New York, 2000. 

\bibitem{Byrnes}
 \textsc{Byrnes GB}\sep
 \textsl{A complete set of Bianchi identities for tensor fields along the tangent bundle projection}\sep
 J. Pys. A: Math. Gen. \textbf{27} (2002) 6617--6632.  

\bibitem{Bianchi}
 \textsc{Crampin M, Mart\'{\i}nez E and Sarlet W}\sep
 \textsl{Linear connections for systems of second-order ordinary differential equations}\sep
 Ann.\ Inst.\ H.\ Poincar\'e\ \textbf{65} (1996) 223--249.

\bibitem{Douglas}
 \textsc{Crampin M, Sarlet W, Mart\'{\i}nez E, Byrnes G. and Prince GE}\sep
 \textsl{Towards a geometrical understanding of Douglas's solution of the inverse problem of the calculus of variations}\sep
 Inverse Problems\ \textbf{10} (1994) 245--260.

\bibitem{CrSa.book}
  \textsc{Crampin M and Saunders DJ}\sep
 \textsl{Cartan geometries and their symmetries. A Lie algebroid approach}\sep
 Atlantis Press, 2016. 

%

\bibitem{sections.along.maps} 
   \textsc{Cari\~nena JF, L\'opez C and Mart\'{\i}nez E}\sep
 \textsl{Sections along a map applied to higher-order Mechanics. Noether's Theorem}\sep
    Acta Applicandae Mathematicae \textbf{25} (1991) 127--151.

\bibitem{DouMi2}
 \textsc{Doupovec M and Mikulski W}\sep 
 \textsl{Prolongations of pairs of connections into connections on vertical bundles}\sep 
 Arch. Math. (Brno) \textbf{41} (2005) 409--422.

\bibitem{DouMi1}
 \textsc{Doupovec M and Mikulski W}\sep 
 \textsl{On the existence of prolongation of connections}\sep 
 Czechoslovak Math. J. \textbf{56} (2006) 1323--1334.

\bibitem{DouMi3}
 \textsc{Doupovec M and Mikulski W}\sep 
 \textsl{Gauge natural prolongation of connections}\sep 
 Ann. Polon. Math. \textbf{95} (2009) 37--50.

\bibitem{GrRo}
 \textsc{Grabowski J and Rotkiewicz M}\sep 
 \textsl{Higher vector bundles and multi-graded symplectic manifolds}\sep
 J. Geom. Phys. \textbf{59} (2009), 1285--1305.

\bibitem{NatOp}
 \textsc{Kol\'a\v{r} I, Michor PW and Slovak J}\sep 
 \textsl{Natural Operations in Differential Geometry}\sep 
 Springer-Verlag, 1993.

\bibitem{Lang}
 \textsc{Lang S}\sep
 \textsl{Differential manifolds}\sep
 Springer-Verlag, New York, 1985.

\bibitem{Mackenzie}
 \textsc{Mackenzie KCH}\sep
 \textsl{General theory of Lie groupoids and Lie algebroids}\sep
 London Mathematical Society Lecture Note Series, \textbf{213}, Cambridge University Press, 2005. 
 
\bibitem{MaMo}
 \textsc{Mangiarotti L and Modugno M}\sep
 \textsl{New operators on jet spaces}\sep
  Annales de la facult\'{e} des sciences de Toulouse 5e s\'{e}rie, tome 5, no 2 (1983), p. 171--198.

\bibitem{MikeFest}
 \textsc{Mart\'{\i}nez E}\sep
 \textsl{Parallel transport and decoupling}\sep
    In the book \textsl{Colloquium on Applied Differential Geometry and Mechanics. (In honour to Mike Crampin, on the occasion of his 60th birthday.)}

\bibitem{tesis}
 \textsc{Mart\'{\i}nez E}\sep
 \textsl{Geometr\'{\i}a de Ecuaciones Diferenciales Aplicada a la Mec\'{a}nica}\sep
 Ph. D. Thesis, University of Zaragoza, Spain; Publicaciones del Seminario Matemático Garc\'{\i}a Galdeano \textbf{36}, 1991.

\bibitem{linearization.1}
 \textsc{Mart\'{\i}nez E}\sep
 \textsl{Linearization of non linear connections on vector and affine bundles, and some applications}\sep
 J. Phys. A: Math. Theor. \textbf{51} (2018) 065201 (33 pp)

\bibitem{linearizable.SODE}
  \textsc{Mart{\'\i}nez E and  Cari{\~n}ena JF}\sep
  \textsl{Geometric characterization of linearizable second-order differential equations}\sep
  Math. Proc. Camb. Phil. Soc. \textbf{119} (1996) 373--381.

\bibitem{deriv}
 \textsc{Mart\'{\i}nez E, Cari\~{n}ena JF and Sarlet W}\sep
 \textsl{Derivations of differential forms along the tangent bundle projection}\sep
 Diff.\ Geom.\ Appl. \textbf{2} (1992) 17--43.

\bibitem{deriv2}
 \textsc{Mart\'{\i}nez E, Cari\~{n}ena JF and Sarlet W}\sep
 \textsl{Derivations of differential forms along the tangent bundle projection II}\sep
 Diff.\ Geom.\ Appl. \textbf{3} (1993) 1--29.

\bibitem{decoupling}
  \textsc{Mart{\'\i}nez E, Cari{\~n}ena JF and Sarlet W}\sep
  \textsl{Geometric characterization of separable second-order equations}\sep
  Math. Proc. Camb. Phil. Soc. \textbf{113} (1993) 205--224.

\bibitem{Massa.Pagani}
 \textsc{Massa E and Pagani E}\sep
 \textsl{Jet bundle geometry, dynamical connections, and the inverse problem of Lagrangian Mechanics}\sep
 Ann. Inst. H. Poincar\'e Phys. Th\'eor. \textbf{61} (1994) 17-62.

\bibitem{Poor}
 \textsc{Poor WA}\sep
 \textsl{Differential Geometric Structures}\sep
 McGaw-Hill, New York, 1981.

\bibitem{IP.autonomo}
   \textsc{Sarlet W, Crampin M and Mart{\'\i}nez E}\sep
   \textsl{The integrability conditions in the inverse problem of the calculus of variations for second-order ordinary differential equations}\sep
   Acta Appl. Math. \textbf{54} (1998), no. 3, 233--273. 
 
\bibitem{generalized.submersive}
 \textsc{Sarlet W, Prince G and Crampin M}\sep
 \textsl{Generalized submersiveness of second-order ordinary differential equations}\sep
 J. Geom. Mech. \textbf{1} (2009) 209--221.

\bibitem{Saunders}
 \textsc{Saunders DJ}\sep
 \textsl{The geometry of jet bundles}\sep
 Cambridge University Press, 1989. 


\bibitem{Szilasi}
 \textsc{Szilasi J}\sep
 \textsl{A setting for spray and Finsler geometry}\sep
 Handbook of Finsler geometry. Vol. 1, 2, 1183--1426, Kluwer Acad. Publ., Dordrecht, 2003.

\bibitem{Vilms}
 \textsc{Vilms J}\sep 
 \textsl{Connections on tangent bundles}\sep 
 J.\ Diff.\ Geom.\ \textbf{1} (1967) 235--243.



\end{thebibliography}
\end{document}